\theoremstyle{plain} \newtheorem{theorem}{Theorem}[section]
\newtheorem{lemma}[theorem]{Lemma}
\newtheorem{corollary}[theorem]{Corollary}
\theoremstyle{remark} \newtheorem{remark}[theorem]{Remark}
\newtheorem{eg}[theorem]{Example}
\theoremstyle{definition} \newtheorem{definition}[theorem]{Definition}
\newcommand{\J}{\mathcal{J}_\beta} 
\newcommand{\Tb}{T_\beta}
\newcommand{\gb}{\gamma(\beta)}
\newcommand{\Xb}{\mathfrak{X}_\beta} 
\newcommand{\infXgb}{\inf X_{\gamma(\beta)}}
\newcommand{\bgb}{\bm{\gamma}(\beta)} 
\newcommand{\1}{1_{\beta}} 
\newcommand{\rr}{\mathbf{r}} 
\newcommand{\Q}{\mathbb{Q}}
\newcommand{\N}{\mathbb{N}}
\newcommand{\R}{\mathbb{R}}
\title{The $\beta$-transformation with a hole}
\date{\today}
\address{School of Mathematics\\
The University of Manchester\\
Oxford Road, Manchester, M13 9PL, UK}
\author{Lyndsey Clark}
\subjclass[2010]{28D05 (Primary), 37B10, 68R15 (Secondary).} \keywords{Open dynamical systems, beta expansions, symbolic dynamics.}
\email{Lyndsey.Clark@manchester.ac.uk}
\begin{document}
 
\begin{abstract}
 This paper extends those of Glendinning and Sidorov \cite{glendinningsidorov15} and of Hare and Sidorov \cite{haresidorov14} from the case of the doubling map to the more general $\beta$-transformation. Let $\beta \in (1,2)$ and consider the $\beta$-transformation $\Tb(x)=\beta x \pmod 1$. Let $\J(a,b) := \{ x \in (0,1) : \Tb^n(x) \notin (a,b) \text{ for all } n \geq 0 \}$. An integer $n$ is bad for $(a,b)$ if every periodic point of period $n$ for $\Tb$ intersects $(a,b)$. Denote the set of all bad $n$ for $(a,b)$ by $B_\beta(a,b)$. In this paper we completely describe the following sets:
\begin{align*}
 D_0(\beta) &= \{ (a,b) \in [0,1)^2 : \J(a,b) \neq \emptyset \}, \\
D_1(\beta) &= \{ (a,b) \in [0,1)^2 : \J(a,b) \text{ is uncountable} \}, \\
D_2(\beta) &= \{ (a,b) \in [0,1)^2 : B_\beta(a,b) \text{ is finite} \}.
\end{align*}
\end{abstract}

\maketitle

\section{Introduction}
Consider $\beta \in (1,2)$ and let $\Tb: [0,1) \to [0,1)$ denote the $\beta$-transformation, that is $\Tb(x) = \beta x \pmod 1$. Our main object of study is the \emph{avoidance set} of a hole:
\begin{equation*}
 \J(a,b) = \{ x \in (0,1) : \Tb^n(x) \notin (a,b) \text{ for all } n \geq 0 \},
\end{equation*}
where $0 < a < b < 1$. This is the set of points whose orbits are disjoint from the ``hole'' $(a,b)$. The map $\Tb$ restricted to $\J(a,b)$ is what is referred to as an \emph{open map}, or a \emph{map with a hole}. Intuitively, if $(a,b)$ is small then $\J(a,b)$ should be large, and vice versa. This paper aims to generalise results about this set from the case of the doubling map to the more general $\beta$-transformation for $\beta \in (1,2)$.

An integer $n$ is bad for $(a,b)$ if every periodic point of period $n$ for $\Tb$ intersects $(a,b)$. Denote the set of all bad $n$ for $(a,b)$ by $B_\beta(a,b)$. Then define as follows:
\begin{align*}
 D_0(\beta) &= \{ (a,b) \in [0,1)^2 : \J(a,b) \neq \emptyset \}, \\
D_1(\beta) &= \{ (a,b) \in [0,1)^2 : \J(a,b) \text{ is uncountable} \}, \\
D_2(\beta) &= \{ (a,b) : B_\beta(a,b) \text{ is finite}\}.
\end{align*}

These sets were fully described for the doubling map by Glendinning and Sidorov \cite{glendinningsidorov15} ($D_0(2)$ and $D_1(2)$) and by Hare and Sidorov \cite{haresidorov14} ($D_2(2)$). This work showed that the structure of these sets is such that the boundaries are devil's staircases. In particular, specific 0-1 words known as balanced words are very important in these descriptions. We show that for $\beta \in (1,2)$ there is a similar devil's staircase structure, but that balanced words alone are insufficient to obtain the full description. Instead we need the more general \emph{maximal extremal pairs}, defined in Section~\ref{extrdefnsect}. In Section~\ref{transfersection} we transfer what results we can from the doubling map and extend them to the general case. Section~\ref{rbeta} describes the unbalanced maximal extremal pairs that are not relevant to the doubling map case and is completely new.

\section{$\beta$-expansions and combinatorics on words} \label{extrdefnsect}
Much of the study of avoidance sets involves combinatorics on words. We therefore include the basic definitions from combinatorics on words here --- see \cite[Chapter 2]{lothaire02} for a more thorough discussion. We will be considering words on the alphabet $\{0,1\}$. Given two finite words $u=u_1\dots u_n$ and $v=v_1\dots v_m$ we denote by $uv$ their concatenation $u_1 \dots u_n v_1 \dots v_m$. In particular $u^k = u \dots u$ ($k$ times) and $u^\infty = \lim_{k \to \infty} u^k$. We denote the length of $u$ by $|u|$ and the number of 1s in $u$ by $|u|_1$. For a finite or infinite word $w$, a finite word $u=u_1 \dots u_m$ is said to be a \emph{factor} of $w$ if there exists an $i$ such that $u_1 \dots u_m = w_{i+1} \dots w_{i+m}$.
To compare words we use the \emph{lexicographic order}: a finite or infinite word $u$ is lexicographically smaller than a word $v$ (that is, $u \prec v$) if either $u_1<v_1$ or there exists $k > 1$ with $u_i = v_i$ for $1 \leq i<k$ and $u_k < v_k$.

A finite or infinite word $w$ is said to be \emph{balanced} if for any two factors $u$ and $v$ of $w$ of equal length, we have that $||u|_1 - |v|_1| \leq 1$. A finite word $w$ is called \emph{cyclically balanced} if $w^2$ is balanced. Infinite aperiodic balanced words are commonly called \emph{Sturmian words}.

We also introduce the following notation. Given a finite word $w$ and a factor $u$ of $w$, we denote by $u$-$\max(w)$ the lexicographically maximal cyclic permutation of $w$ that begins with the word $u$. Similarly we denote by $u$-$\min(w)$ the lexicographically minimal cyclic permutation of $w$ that begins with the word $u$. For example, given $w=10100$, we have 0-$\max(w) = 01010$ and 1-$\min(w) = 10010$.

In order to use combinatorics on words in the context of the $\beta$-transformation, we recall that $\Tb$ is conjugate to the shift map on a subset of $\Sigma = \{0,1\}^\N$. This arises by writing a number $x$ as
\begin{equation*}
 x = \sum_{i \geq 1} x_i \beta^{-i},
\end{equation*}
with $x_i \in \{0, 1\}$, as first studied in \cite{renyi57}. In particular, we consider the \emph{greedy} $\beta$-expansion of $x$, namely the expansion with $x_i = \lfloor \beta T_\beta^{i-1} x \rfloor$. Informally the greedy expansion is given by taking a $1$ whenever possible. We denote the set of possible (``admissible'') greedy sequences $(x_i)_{i=1}^\infty$ by $\Xb$.

Consider the expansion of $1$ given by $\tilde{d}_i = \lfloor \beta T_\beta^{i-1}(1) \rfloor$. If this sequence is infinite (i.e. does not end in $0^\infty$) then set $d_i = \tilde{d}_i$. If $\tilde{d}_i$ is finite then let $k = \max \{j : \tilde{d}_j \neq 0 \}$ and set $d_1 d_2 \dots = (\tilde{d}_1 \dots \tilde{d}_{k-1} 0 )^\infty$. This is the periodic \emph{quasi-greedy expansion} of $1$. Then as shown by Parry in \cite{parry60}, we have
\begin{equation*}
 \Xb = \{ (x_i)_{i= 1}^\infty : x_j x_{j+1} x_{j+2} \dots \preceq d_1 d_2 d_3 \dots \text{ for all } j \in \N \}.
\end{equation*}

We will denote the quasi-greedy expansion of $1$ for a given $\beta$ by $\1$.

\begin{eg}
 Consider $\beta$ given by the golden ratio $\phi=(1+\sqrt{5})/2$. Then the greedy expansion of $1$ is $110^\infty$ and the quasi-greedy expansion of $1$ is $\1=(10)^\infty$. Therefore for this value of $\beta$, the set $\Xb$ is the set of all 0-1 sequences that do not contain $11$ as a factor.
\end{eg}
As $\beta$ increases, $\1$ increases lexicographically. For example, large $\beta$ close to $2$ will have $\1 = 1^n 0 \dots$ for some large $n$. Small $\beta$ close to $1$ will have $\1 = 10^n 1 \dots$ for some large $n$.

Throughout this paper we will refer to a point $x \in (0,1)$ and its expansion $(x_i)_{i=1}^\infty \in \Xb$ interchangeably. The only possible ambiguity here is where a point has a finite expansion; that is to say $(x_i)=u10^\infty$ for some finite admissible word $u$. Here we naturally have $u10^\infty = u0\1$. Generally in such cases we will use the finite expansion by default and will specify if this is not the case.

We also make use of the idea of extremal pairs, linked to the study of Lorenz maps through kneading invariants -- see \cite{hubbardsparrow90} and \cite{glendinningsparrow93}. The essential idea is that given some orbit, we take two neighbouring points of that orbit, meaning that the rest of the orbit does not fall between these two points.

\begin{definition}[Extremal pairs\footnote{There are more general versions of this concept, as seen in \cite{hubbardsparrow90} and \cite{glendinningsparrow93}. These cover cases when $t$ is not a cyclic permutation of $s$ and cases involving infinite aperiodic orbits. However, these situations are not needed for our context and so we omit a more general definition.}]
 Let $(s,t)$ be a pair of finite $\{0,1\}$ words with $t$ a cyclic permutation of $s$ and $s^\infty \prec t^\infty$. Then $(s,t)$ is said to be an \emph{extremal pair} if for every $k \in \N$, either $\sigma^k s^\infty \preceq s^\infty$ or $\sigma^k s^\infty \succeq t^\infty$.
\end{definition}
Notice that we do not require that $s_1=0$ and $t_1=1$ as in \cite{glendinningsidorov15}. However as an immediate consequence of the definition we have that for every extremal pair $(s,t)$ there exist words $w$ and $u$ such that $u0$ and $u1$ are factors of $w$ and $s=u0$-$\max(w)$ and $t= u1$-$\min(w)$.

\begin{eg}
 Consider the periodic point $(1100)^\infty$. Extremal pairs arising from this orbit are $(0011,0110)$, $(0110,1001)$, and $(1001,1100)$. However the pair $(s,t)=(0110,1100)$ is \emph{not} extremal because $s^\infty \prec (1001)^\infty \prec t^\infty$.
\end{eg}

Given an extremal pair $(S,T)$, we have $S, T \in \{0,1\}^n$ and so denote more fully the pair as $(S(0,1), T(0,1))$. Then one can take another extremal pair $(s,t)$ and use this pair as an alphabet to gain the pair $(S(s,t), T(s,t))$. These words then belong to $\{s,t\}^n$. We call such a pair $(S(s,t), T(s,t))$ a \emph{descendant of $(s,t)$}. It is shown in \cite[Proposition 2.1]{glendinningsidorov15} that all such descendants are themselves extremal pairs.

\begin{definition}[Maximal extremal pairs]
 An extremal pair $(s,t)$ is said to be \emph{maximal} if firstly there does not exist any point $x$ such that the orbit of $x$ is contained in one of either $[0, s^\infty)$ or $(t^\infty, 1)$, and secondly there does not exist a distinct extremal pair $(\tilde{s}, \tilde{t})$ such that $(s^\infty, t^\infty) \subset (\tilde{s}^\infty, \tilde{t}^\infty)$.
\end{definition}

\begin{eg}
 For the doubling map, the extremal pair $(0110,1001)$ is \emph{not} a maximal extremal pair because $(01,10)$ is an extremal pair with
\[ (01)^\infty \prec (0110)^\infty \prec (1001)^\infty \prec (10)^\infty. \]
It is shown in \cite{glendinningsidorov15} that $(01,10)$ is an example of a maximal extremal pair for the doubling map.
\end{eg}

We make one further definition pertaining to avoidance sets:
\begin{definition}
 We say an avoidance set $\J(a,b)$ is \emph{essentially equal to} an invariant set $X$, written $\J(a,b) \doteq X$, if for every $y \in \J(a,b)$, there exists $x \in X$ and $w \in \{0,1\}^n$ with $y = wx$.
\end{definition}
The idea of this definition is that we would like to describe an avoidance set up to preimages (expressed by the word $w$), but we cannot possibly mean \emph{every} preimage, because some preimages will themselves fall into $(a,b)$. Therefore this definition implicitly comes with substantial restrictions on what words $w$ will be possible, because we cannot have $y=wx \in (a,b)$ or indeed $\sigma^ny \in (a,b)$ for any $n$.

\begin{lemma} \label{maxisnice}
An extremal pair $(s,t)$ is maximal if and only if
\[\J(s^\infty, t^\infty) \doteq \{ \sigma^n s^\infty  : n \in \N \}.\]
\end{lemma}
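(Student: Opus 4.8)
The plan is to first reinterpret the right–hand side. Since $s^\infty$ is periodic the set $O := \{\sigma^n s^\infty : n \in \N\}$ is finite, and since $t$ is a cyclic permutation of $s$ we have $t^\infty = \sigma^k s^\infty$ for some $k$, so $O$ is also the orbit of $t^\infty$; in particular $s^\infty, t^\infty \in O$, and the extremal property forces every element of $O$ to be $\preceq s^\infty$ or $\succeq t^\infty$, so $O \subseteq \J(s^\infty, t^\infty)$. Because any orbit that meets $O$ thereafter cycles through all of $O$, it is immediate that $\J(s^\infty, t^\infty) \doteq O$ holds if and only if every $y \in \J(s^\infty, t^\infty)$ satisfies $\sigma^N y \in O$ for some $N \geq 0$. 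I would prove the two implications separately, taking the backward one by contraposition.

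For ``$\Leftarrow$'' assume $(s,t)$ is \emph{not} maximal; I must exhibit $y \in \J(s^\infty, t^\infty)$ with $\sigma^N y \notin O$ for all $N$. If the first defining condition fails there is a point $x \in (0,1)$ whose orbit lies in $[0, s^\infty)$ (the case $(t^\infty,1)$ is symmetric); then $x \in \J(s^\infty, t^\infty)$, and its orbit cannot meet $O$, since an orbit meeting $O$ eventually passes through $s^\infty \notin [0, s^\infty)$. If instead the second condition fails there is a distinct extremal pair $(\tilde s, \tilde t)$ with $(s^\infty, t^\infty) \subsetneq (\tilde{s}^\infty, \tilde{t}^\infty)$; then $\tilde{s}^\infty \in \J(\tilde{s}^\infty, \tilde{t}^\infty) \subseteq \J(s^\infty, t^\infty)$, and the periodic orbit of $\tilde{s}^\infty$ is disjoint from $O$: two periodic orbits are equal or disjoint, and if they coincided then $O$ would contain a point strictly between $\tilde{s}^\infty$ and $\tilde{t}^\infty$ (namely $s^\infty$ or $t^\infty$), contradicting the extremality of $(\tilde s, \tilde t)$. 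In either case $\J(s^\infty, t^\infty) \not\doteq O$.

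For ``$\Rightarrow$'' assume $(s,t)$ is maximal, let $y \in \J(s^\infty, t^\infty)$, and suppose for contradiction that $\sigma^N y \notin O$ for all $N$. Since each $\sigma^n y \notin (s^\infty,t^\infty)$ and $\sigma^n y \neq s^\infty, t^\infty$ (both lie in $O$), every $\sigma^n y$ is strictly below $s^\infty$ or strictly above $t^\infty$. The heart of the proof is the claim that the orbit of $y$ is eventually confined to $[0,s^\infty)$ or eventually confined to $(t^\infty,1)$; granting this, a suitable shift $z = \sigma^N y$ lies in $(0,1)$ and has its whole orbit in $[0,s^\infty)$ (or in $(t^\infty,1)$), contradicting the first defining property of a maximal extremal pair. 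Hence some $\sigma^N y \in O$, which by the reformulation above gives $\J(s^\infty, t^\infty) \doteq O$.

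It remains to prove the claim, and I expect this to be the main obstacle. Suppose the orbit of $y$ is not eventually confined to either side, so it lies strictly below $s^\infty$ infinitely often and strictly above $t^\infty$ infinitely often. Writing $u$ for the longest common prefix of $s^\infty$ and $t^\infty$ (so $s^\infty = u0\dots$ and $t^\infty = u1\dots$) and using repeatedly that each shift of $s^\infty$ is $\preceq s^\infty$ or $\succeq t^\infty$, I would analyse how $y$ decomposes over the blocks determined by $s^\infty$ and $t^\infty$, together with the admissibility constraint $\sigma^n y \preceq \1$, to extract from such a ``mixing'' orbit a new extremal pair $(\tilde s, \tilde t) \neq (s,t)$ with $(s^\infty,t^\infty) \subsetneq (\tilde{s}^\infty, \tilde{t}^\infty)$ --- in the spirit of the motivating description of extremal pairs, by taking a periodic point inside the orbit closure of $y$ that still visits both sides of $[s^\infty,t^\infty]$ and then picking out the two neighbouring orbit points that sandwich the hole. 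This contradicts the second defining property of a maximal extremal pair, completing the argument. The delicate points will be (i) carrying the quasi-greedy admissibility condition $\sigma^n y \preceq \1$ alongside the hole condition throughout --- this is automatic for the doubling map of \cite{glendinningsidorov15} but not in general, and is what ultimately forces unbalanced maximal extremal pairs to appear --- and (ii) producing a genuinely \emph{periodic} sandwiching pair rather than an aperiodic one, which I expect to handle by first passing to a minimal subsystem of $\overline{\{\sigma^n y : n \in \N\}}$.
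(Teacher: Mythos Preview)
Your backward direction is correct and in fact more explicit than the paper, which argues only the forward implication.

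For the forward direction the paper takes a much shorter route than you do. It first observes that maximality forces $O=\{\sigma^n s^\infty\}$ to be the \emph{only periodic orbit} avoiding $(s^\infty,t^\infty)$: a distinct periodic orbit $P$ avoiding the hole is either one--sided (contradicting the first clause of maximality) or two--sided, in which case taking the largest point of $P$ below $s^\infty$ and the smallest above $t^\infty$ already produces a strictly larger extremal pair (contradicting the second clause). Having established this, the paper simply invokes density of periodic points in $\Xb$: approximate any $x\in\J(s^\infty,t^\infty)$ by periodic points; all of these except shifts of $s^\infty$ must enter the open hole, and in the limit this forces the orbit of $x$ onto the boundary $\{s^\infty,t^\infty\}\subset O$, so $x=ws^\infty$ for some finite $w$. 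Admissibility never needs separate bookkeeping because the whole argument lives inside $\Xb$.

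Your plan instead tries to feed the two--sided case back into the second clause of maximality by manufacturing a strictly larger extremal pair from $\overline{\{\sigma^n y\}}$. The difficulty you flag is genuine, and passing to a minimal subsystem does not resolve it: a minimal subsystem of the orbit closure need not contain any periodic points whatsoever (a Sturmian subshift is the obvious obstruction), so the sandwiching points you extract will in general be aperiodic and hence do not constitute an extremal pair in the paper's sense. The paper's periodic--density argument sidesteps this completely --- it never has to build a new extremal pair, only to rule out periodic orbits other than $O$. If you want to salvage your approach, the missing ingredient is precisely a periodic orbit inside (or shadowing) your two--sided orbit closure, and the cleanest way to obtain one is again density of periodic points in $\Xb$; at that point you have essentially reproduced the paper's argument.
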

\begin{proof}
Suppose $(s,t)$ is maximal. Then by definition the only periodic orbit avoiding $(s^\infty, t^\infty)$ is $s^\infty$ itself. Suppose there exists some point $x$ that avoids the hole. Approximate $x$ by periodic points. All of these periodic points must fall into $(s^\infty, t^\infty)$. Therefore by taking limits we see that the only way the orbit of $x$ can avoid the hole is by falling onto the boundary. Therefore $x$ must be of the form $ws^\infty$ for some finite word $w$, where $\sigma^n w s^\infty \notin (s^\infty, t^\infty)$ for every $n$. This is precisely what it means to have $\J(s^\infty, t^\infty) \doteq \{ \sigma^n s^\infty \}$.
\end{proof}

\subsection{Admissible balanced sequences for the $\beta$-transformation}
For the case $\beta=2$ the maximal extremal pairs are formed from balanced words, as shown in \cite{glendinningsidorov15}. Hence, the first issue is to establish which balanced sequences are admissible for which $\beta$. A detailed exposition on balanced words may be found in the book by Lothaire \cite[\S 2]{lothaire02} and the survey paper by Vuillon \cite{vuillon03}. There are many ways of defining both finite cyclically balanced words and Sturmian sequences. We will do so using the Farey tree\footnote{Note the commonly defined infinite version of this tree is known as the Stern-Brocot tree.} as the tree structure allows for easier proofs later.

\begin{definition}[Farey tree]
We construct the Farey tree inductively. Take $0$ and $1$ as initial words, with associated fractions $0/1$ and $1/1$ respectively. Two fractions $a/b$ and $c/d$ with $a/b < c/d$ are said to be neighbours if $bc-ad=1$, and two words $w_{a/b}$ and $w_{c/d}$ are said to be neighbours if their associated fractions are neighbours.

Given two neighbouring words $w_{\gamma_1}$ and $w_{\gamma_2}$ such that $w_{\gamma_1}^\infty < w_{\gamma_2}^\infty$, combine the associated fractions $\gamma_1 = a_1/b_1$ and $\gamma_2 = a_2/b_2$ to make $\gamma_1 \oplus \gamma_2 = (a_1+a_2)/(b_1+b_2)$. Then we define $w_{\gamma_1 \oplus \gamma_2} = w_{\gamma_2}w_{\gamma_1}$.
\end{definition}
The beginning of the trees for both fractions and words are depicted in Figure~\ref{fareytreepic}.

Given $\gamma = \gamma_1 \oplus \gamma_2$ with $\gamma_1 < \gamma_2$ we will refer to $\gamma_1$ and $\gamma_2$ as the \emph{left} and \emph{right Farey parents} respectively, and $\gamma$ will be referred to as the \emph{child} of $\gamma_1$ and $\gamma_2$. We will also use these terms for the associated words where appropriate.

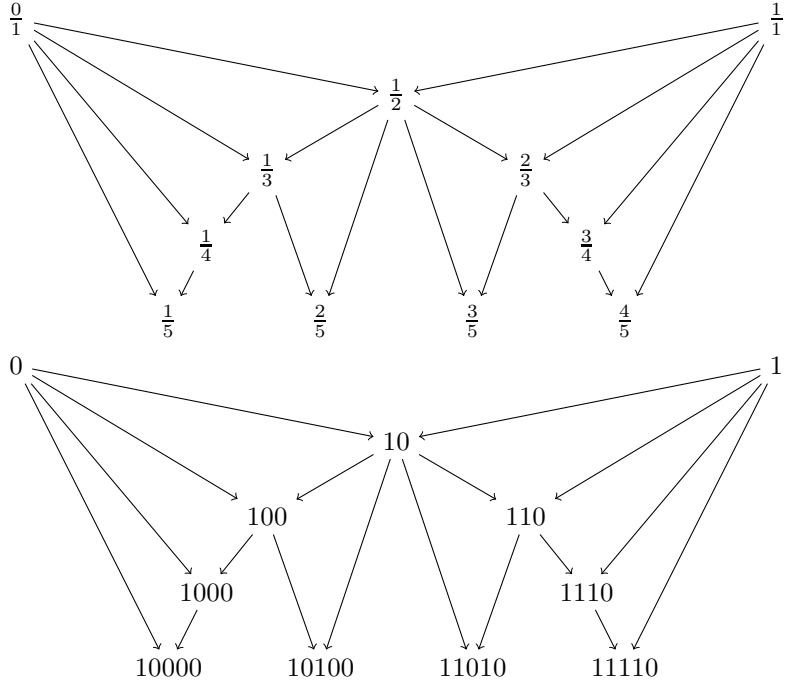
\begin{figure}
\centering
\begin{tikzpicture}[->]
\node (0) at (0,5) {$\frac{0}{1}$};
\node (1) at (10,5) {$\frac{1}{1}$};

\node (12) at (5,4) {$\frac{1}{2}$};

\node (13) at (3.3,3) {$\frac{1}{3}$};
\node (23) at (6.7,3) {$\frac{2}{3}$};

\node (14) at (2.5,2) {$\frac{1}{4}$};
\node (34) at (7.5,2) {$\frac{3}{4}$};

\node (15) at (2,1) {$\frac{1}{5}$};
\node (25) at (4,1) {$\frac{2}{5}$};
\node (35) at (6,1) {$\frac{3}{5}$};
\node (45) at (8,1) {$\frac{4}{5}$};

\path (0) edge (12);
\path (0) edge (13);
\path (0) edge (14);
\path (0) edge (15);

\path(1) edge (12);
\path(1) edge (23);
\path(1) edge (34);
\path(1) edge (45);

\path (12) edge (13);
\path (12) edge (23);

\path (13) edge (14);
\path (23) edge (34);

\path (12) edge (25);
\path (12) edge (35);

\path (13) edge (25);
\path (23) edge (35);

\path (14) edge (15);
\path (34) edge (45);

\end{tikzpicture}

\begin{tikzpicture}[->]
\node (0) at (0,5) {$0$};
\node (1) at (10,5) {$1$};

\node (12) at (5,4) {$10$};

\node (13) at (3.3,3) {$100$};
\node (23) at (6.7,3) {$110$};

\node (14) at (2.5,2) {$1000$};
\node (34) at (7.5,2) {$1110$};

\node (15) at (2,1) {$10000$};
\node (25) at (4,1) {$10100$};
\node (35) at (6,1) {$11010$};
\node (45) at (8,1) {$11110$};

\path (0) edge (12);
\path (0) edge (13);
\path (0) edge (14);
\path (0) edge (15);

\path(1) edge (12);
\path(1) edge (23);
\path(1) edge (34);
\path(1) edge (45);

\path (12) edge (13);
\path (12) edge (23);

\path (13) edge (14);
\path (23) edge (34);

\path (12) edge (25);
\path (12) edge (35);

\path (13) edge (25);
\path (23) edge (35);

\path (14) edge (15);
\path (34) edge (45);

\end{tikzpicture}
\caption{The Farey tree and corresponding balanced words}
\label{fareytreepic}
\end{figure}

For rational $\gamma=p/q$, it is well known (see Lothaire \cite[\S 2]{lothaire02}) that the word $w_\gamma$ is the unique (up to cyclic permutation) cyclically balanced word of length $q$ containing $p$ 1s. Define $X_{p/q}$ to be the finite set
\begin{equation*}
 X_{p/q} = \{ \sigma^n (w_\gamma^\infty) : n \in \N \}.
\end{equation*}
These sets $X_{\gamma}$ has been well studied in \cite{bullettsentenac94}. For rational $\gamma$ the word $w_\gamma$ given by the Farey tree turns out to be the maximal cyclic shift; that is to say $w_\gamma^\infty$ is the maximal element of $X_\gamma$.

The limit points of the tree correspond to irrational $\gamma$, and give rise to the infinite aperiodic balanced words known as Sturmian sequences. In this case the set $X_\gamma$ is a Cantor set with Hausdorff dimension $0$ and $w_\gamma$ is the supremum of $X_\gamma$ (\cite{bullettsentenac94}).

Each rational $\gamma$ gives rise to two distinct infinite balanced words. Given $w_\gamma = w_{\gamma_2}w_{\gamma_1}$, these words are $w_\gamma^\infty$ and $w_{\gamma_2} w_\gamma^\infty$.

Therefore we may define a function as follows:

\begin{definition}[$\gb$]
 Define $ \gamma: (1,2) \to (0,1)$ to be the number associated to the maximal admissible balanced word for a given $\beta \in (1,2)$.
\end{definition}
Naturally we have that if $\gamma_1 < \gamma_2$ then $w_{\gamma_1} \prec w_{\gamma_2}$. Therefore $\gb$ is non-decreasing, with the effect that for a given $\beta$, $w_\gamma^\infty$ is admissible if and only if $\gamma \leq \gb$.

We can describe this function using the following lemma:
\begin{lemma}[Admissible balanced words] \label{gbdescription} We have $\gamma(\beta) = \gamma \in \Q$ for every $\beta$ such that $\1 \in [w_\gamma^\infty, w_{\gamma_2} w_\gamma^\infty]$, where $\gamma_2$ is the right Farey parent of $\gamma$.
\end{lemma}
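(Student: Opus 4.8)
The plan is to reduce the lemma to a comparison of balanced words using Parry's description of $\Xb$, and then to settle that comparison via the Farey tree with the case $\gamma = 1/2$ as the base. Write $\gamma_1 < \gamma_2$ for the Farey parents of $\gamma$, so that $w_\gamma = w_{\gamma_2}w_{\gamma_1}$, and recall that $w_\gamma^\infty = \max X_\gamma$ and that for rational $\gamma'$ the word $w_{\gamma'}^\infty$ is admissible if and only if $\gamma' \le \gb$. Under the hypothesis $\1 \in [w_\gamma^\infty, w_{\gamma_2}w_\gamma^\infty]$ it then suffices to show: (i) $w_\gamma^\infty$ is admissible, which is immediate since every $\sigma^k w_\gamma^\infty$ is a cyclic permutation of $w_\gamma^\infty$, so $\sigma^k w_\gamma^\infty \preceq w_\gamma^\infty \preceq \1$ and Parry's condition applies, and this gives $\gamma \le \gb$; and (ii) $w_{\gamma'}^\infty$ is not admissible for any rational $\gamma' > \gamma$, which gives $\gb < \gamma'$ for all such $\gamma'$, hence $\gb \le \gamma$, and together with (i) forces $\gb = \gamma$. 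Since $w_{\gamma'}$ is again the maximal cyclic permutation occurring in $X_{\gamma'}$, the word $w_{\gamma'}^\infty$ violates Parry's condition as soon as $w_{\gamma'}^\infty \succ \1$, and as $\1 \preceq w_{\gamma_2}w_\gamma^\infty$ part (ii) reduces to the combinatorial statement $(\ast)$: \emph{whenever $\gamma < \gamma'$ are rationals in $(0,1)$, $w_{\gamma'}^\infty \succ w_{\gamma_2}w_\gamma^\infty$.}

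To prove $(\ast)$ I would split on whether $\gamma' \ge \gamma_2$. If $\gamma' \ge \gamma_2$ then $w_{\gamma'}^\infty \succeq w_{\gamma_2}^\infty$ by monotonicity, and $w_{\gamma_2}^\infty$ and $w_{\gamma_2}w_\gamma^\infty = w_{\gamma_2}(w_{\gamma_2}w_{\gamma_1})^\infty$ share the prefix $w_{\gamma_2}w_{\gamma_2}$, after which they continue with $w_{\gamma_2}^\infty$ and $(w_{\gamma_1}w_{\gamma_2})^\infty$ respectively; since $(w_{\gamma_1}w_{\gamma_2})^\infty$ is a cyclic permutation of $w_\gamma^\infty$ we get $(w_{\gamma_1}w_{\gamma_2})^\infty \preceq w_\gamma^\infty \prec w_{\gamma_2}^\infty$, so $w_{\gamma_2}^\infty \succ w_{\gamma_2}w_\gamma^\infty$ and $(\ast)$ follows. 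If instead $\gamma < \gamma' < \gamma_2$, then $\gamma'$ lies in the sub-tree of the Farey tree below the edge $\{\gamma_1,\gamma_2\}$, on which the word assignment is obtained from the one on the full tree by the substitution $\mu\colon 0 \mapsto w_{\gamma_1},\ 1 \mapsto w_{\gamma_2}$; concretely $w_{\gamma'} = \mu(w_{t'})$ and $w_{\gamma_2}w_\gamma^\infty = \mu\bigl(1(10)^\infty\bigr)$, where $t' \in (1/2,1)$ is the point corresponding to $\gamma'$ and $1(10)^\infty$ is precisely $w_{\gamma_2}w_\gamma^\infty$ in the base case $\gamma = 1/2$. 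Since $(w_{\gamma_1},w_{\gamma_2})$ is a standard pair, $\mu$ preserves the lexicographic order, so $(\ast)$ reduces to the base case: $w_{t'}^\infty \succ 1(10)^\infty$ for every rational $t' > 1/2$.

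The base case I would verify by hand. As $t' > 1/2$ the word $w_{t'}$ has more $1$s than $0$s, hence contains the factor $11$ cyclically, and being the maximal cyclic permutation it begins with a run of $1$s of length $\ge 2$; so $w_{t'}^\infty$ begins with $11$. Comparing $w_{t'}^\infty$ with $1(10)^\infty = 1,1,0,1,0,1,0,\dots$ position by position, at the first disagreement $1(10)^\infty$ is either $0$ at an odd position — in which case $w_{t'}^\infty$ is strictly larger and we are done — or $1$ at an even position, where a $0$ in $w_{t'}^\infty$ would create the factor $00$ alongside the initial $11$, contradicting balancedness; hence $w_{t'}^\infty$ either strictly exceeds $1(10)^\infty$ or agrees with it forever, and the latter is impossible because $w_{t'}^\infty$ is purely periodic whereas $1(10)^\infty$ is not. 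This gives the base case, hence $(\ast)$, hence the lemma.

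The main obstacle is the claim that the standard-pair substitution $\mu$ preserves lexicographic order: a general Sturmian morphism need not, so one must use the specific combinatorics of a standard pair — in particular that $w_{\gamma_1}w_{\gamma_2}$ and $w_{\gamma_2}w_{\gamma_1}$ agree except in their last two letters — together with the identification of the sub-tree below $\{\gamma_1,\gamma_2\}$ with the full tree via $\mu$. This is where I would lean on the theory of standard words and Sturmian morphisms in Lothaire \cite[Chapter~2]{lothaire02}. As an alternative I would prove all of $(\ast)$ by a direct induction on the Farey-tree depth, unwinding the recursion $w_\gamma = w_{\gamma_2}w_{\gamma_1}$, but the substitution route keeps the bookkeeping lighter; either way the dynamical input (Parry's theorem and $w_\gamma^\infty = \max X_\gamma$) is routine and the combinatorics on words carries the weight.
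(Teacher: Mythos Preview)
Your argument is correct (modulo the flagged order-preservation of $\mu$, which does hold for standard pairs but needs its own verification), yet the paper reaches the same conclusion by a much lighter route that bypasses $(\ast)$ altogether. Instead of comparing $w_{\gamma'}^\infty$ with $w_{\gamma_2}w_\gamma^\infty$ for \emph{every} rational $\gamma'>\gamma$, the paper writes down a single explicit sequence of such rationals: set $r_n=\gamma_2\oplus\gamma\oplus\cdots\oplus\gamma$ (with $n$ copies of $\gamma$), so that $r_n\searrow\gamma$ and $w_{r_n}^\infty=(w_{\gamma_2}w_\gamma^{\,n})^\infty\searrow w_{\gamma_2}w_\gamma^\infty$. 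The paper then argues the right endpoint by contrapositive: if $\1\succ w_{\gamma_2}w_\gamma^\infty$ then some $w_{r_n}^\infty\preceq\1$, so $w_{r_n}^\infty$ is admissible and $\gb\ge r_n>\gamma$; together with the first-line equivalence $\gb\ge\gamma\Longleftrightarrow\1\succeq w_\gamma^\infty$ this pins down the plateau. Note that the very same sequence also yields your $(\ast)$ in one stroke: given any rational $\gamma'>\gamma$, pick $n$ with $r_n\le\gamma'$ and use monotonicity of $\gamma''\mapsto w_{\gamma''}^\infty$ to get $w_{\gamma'}^\infty\succeq w_{r_n}^\infty\succ w_{\gamma_2}w_\gamma^\infty$. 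So your case split, the substitution $\mu$, and the hand-checked base case at $\gamma=1/2$ are all replaced by a single Farey-tree observation; what your approach buys is an explicit display of the tree's self-similarity, at the cost of having to justify that $\mu$ preserves lexicographic order.
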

\begin{proof}
Firstly notice that as $w_\gamma^\infty$ is the maximal element of $X_\gamma$, it is clear that $w_\gamma^\infty$ is admissible if and only if $\1 \succeq w_\gamma^\infty$, and so $\gb \geq \gamma$ if and only if $\1 \succeq w_\gamma^\infty$.

To see the result for the right endpoint, consider the sequence of rationals $r_n = \gamma_2 \oplus \gamma \oplus \dots \oplus \gamma$ ($n$ times). Then $r_n \searrow \gamma$ and $w_{r_n}^\infty = (w_{\gamma_2} w_\gamma^n)^\infty \searrow w_{\gamma_2} w_\gamma^\infty$. Therefore, if $\1 \succ w_{\gamma_2} w_\gamma^\infty$ then there exists some $n$ such that $w_{r_n}^\infty$ is admissible, meaning that $\gamma(\beta) \geq r_n > \gamma$.
\end{proof}
By a similar argument it is easy to see that $\gamma(\beta)$ takes an irrational value $\gamma$ precisely when $\1 = w_\gamma$.

\begin{eg} For example, $\gb = 1/2$ for every $\beta$ such that $\1 \in [(10)^\infty, 1(10)^\infty]$. This corresponds to $\beta \in [\varphi, 1.8019\dots]$ where $\varphi$ denotes the golden ratio. This is the largest plateau visible in Figure~\ref{staircase}.
\end{eg}

\begin{figure}
\centering
 \includegraphics[width=0.8\textwidth]{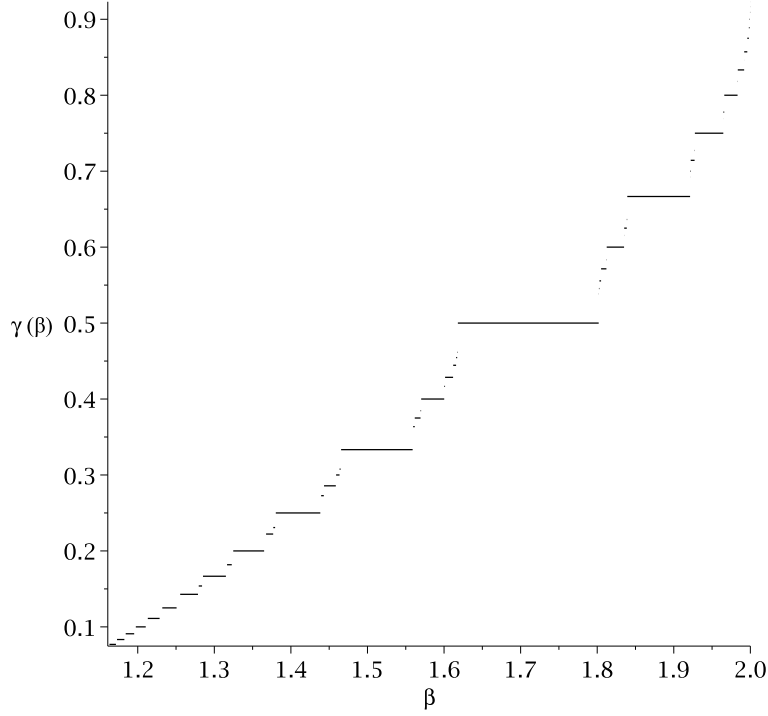}
\caption{$\gamma(\beta)$, a devil's staircase.}
\label{staircase}
\end{figure}

One consequence of Lemma~\ref{gbdescription} is that the intervals $[w_\gamma^\infty, w_{\gamma_2} w_\gamma^\infty]$ are disjoint for distinct $\gamma \in \Q$. As can be seen in Figure \ref{staircase}, $\gb$ is a devil's staircase: it is continuous, non-decreasing, and has zero derivative almost everywhere. The same function arises when considering digit frequencies for $\beta$-expansions, as described by Boyland et al.\ in \cite{boylandetal13}.

We can associate an extremal pair with each rational $\gamma$. Given the word $w_\gamma$, take $s_\gamma=0$-$\max (w_\gamma)$ and $t_\gamma=1$-$\min (w_\gamma)$. For example, $s_{2/5}=01010$ and $t_{2/5} = 10010$.

These pairs may themselves be constructed using a tree structure: given two neighbouring pairs $(s_{\gamma_1}, t_{\gamma_1})$ and $(s_{\gamma_2}, t_{\gamma_2})$ with associated rationals $0<\gamma_1 < \gamma_2<1$, their child is $(s_{\gamma_2} s_{\gamma_1}, t_{\gamma_1} t_{\gamma_2})$.\footnote{Note the reason for excluding $\gamma = 0$ or $1$ is that here $s$ and $t$ are not well defined.}

\begin{remark} Balanced pairs satisfy the following:
\begin{align*}
 s_{\gamma_1} t_{\gamma_1}^\infty &< (s_{\gamma_2} s_{\gamma_1})^\infty, \\
s_{\gamma_2}s_{\gamma_1}(t_{\gamma_1}t_{\gamma_2})^\infty &< s_{\gamma_2}^\infty.
\end{align*}
This means that the intervals $[s_\gamma^\infty, s_\gamma t_\gamma^\infty]$ do not overlap.
\end{remark}
This is shown for balanced words in \cite[Lemma 3.2]{haresidorov14}, and may also be seen as a corollary of Lemma~\ref{gbdescription}. We remark that the result is actually a property of the tree construction method, not specifically of the words themselves: to obtain the same result for a different tree, we simply take the tree for balanced words and map $0$ and $1$ to the left and right roots of our alternative tree. Then provided the left root is less than the right root, the result will hold.

\subsection{Descendants of balanced words}
For describing $D_0(\beta)$, the normal balanced words are sufficient. However, to describe $D_1(\beta)$ one must also consider their descendants. These are defined in \cite{glendinningsidorov15} and for completeness' sake we repeat the discussion here. These words are not themselves balanced but are derived from balanced words.

Consider $p/q \in (0,1)$. Define a function $\rho_{p/q}: \{0,1\} \to \{0,1\}^q$ by
\begin{align*}
 \rho_{p/q}(0) &= s_{p/q}, \\
 \rho_{p/q}(1) &= t_{p/q},
\end{align*}
where $s=0$-$\max (w_{p/q})$ and $t= 1$-$\min (w_{p/q})$ are the balanced extremal pair associated to $p/q$ as defined in the previous section. We extend the definition of $\rho$ to finite words by concatenation: given some word $u=u_1 \dots u_n$ define $\rho_{p/q}(u_1 \dots u_n) = \rho_{p/q}(u_1) \dots \rho_{p/q} (u_n)$.

Then for $\rr \in (\Q \cap (0,1))^n$ we may define as follows:
\begin{align*}
s_{\rr} &= \rho_{r_1} \rho_{r_2} \dots \rho_{r_n} (0), \\
 t_{\rr} &= \rho_{r_1} \rho_{r_2} \dots \rho_{r_n} (1).
\end{align*}

\begin{eg} We have $s_{(2/5,1/2)} = \rho_{2/5} \rho_{1/2} (0) = \rho_{2/5}(01) = 0101010010$ and $t_{(2/5,1/2)} = \rho_{2/5}(10) = 1001001010$.
\end{eg}

By taking limits this definition may be extended to $\rr \in (\Q \cap (0,1))^\N$ and to $\rr \in (\Q^{n-1} \times \R) \cap (0,1)^n$.

\begin{remark} \label{fareydescs}
In \cite[Lemma 2.10]{glendinningsidorov15} it is shown that:
\begin{enumerate}
 \item Given $\rr = (r_1, \dots, r_n)$ and $\rr' = (r_1', \dots, r_n')$ with $\rr \neq \rr'$, we have that $[s_{\rr}^\infty, s_{\rr}t_{\rr}^\infty] \cap [s_{\rr'}^\infty, s_{\rr'}t_{\rr'}^\infty] = \emptyset$;
 \item Given $\rr = (r_1, \dots, r_n)$ and $\tilde{\rr} = (r_1, \dots, r_{n-1})$, we have that
\begin{equation*}
 [s_{\rr}^\infty, s_{\rr}t_{\rr}^\infty] \subset [s_{\tilde{\rr}}t_{\tilde{\rr}}s_{\tilde{\rr}}^\infty, s_{\tilde{\rr}}t_{\tilde{\rr}}^\infty].
\end{equation*}
\end{enumerate}
\end{remark}

This enables us to make the following definition.
\begin{definition}[$\bgb$]
 We define $\bgb$ to be the (finite or infinite) sequence of numbers corresponding to the maximal admissible balanced descendant. We write this as a vector:
\begin{equation*}
 \bgb \in \left( \bigcup_{n=1}^\infty (\Q \cap (0,1))^n \right) \cup \left( \bigcup_{n=1}^\infty (\Q^{n-1} \times \R) \cap (0,1)^n \right).
\end{equation*}
\end{definition}

Remark~\ref{fareydescs} enables us to note that the function $\gb$ defined in the previous section corresponds to $(\bgb)_1$. Essentially on each plateau of $\gb$, we define a new devil's staircase giving $(\bgb)_2$. Each plateau of this will then give rise to a further devil's staircase for $(\bgb)_3$, and so the process continues. Throughout this text we shall refer to $(\bgb)_1=\gb$: we will need the vector when discussing $D_1(\beta)$, but only the scalar is needed for $D_0(\beta)$ and $D_2(\beta)$.

\begin{eg}
 Consider the case $\1 = (10010000)^\infty$, $\beta \approx 1.427$. Here $(1000)^\infty$ is admissible but $100(1000)^\infty$ is inadmissible, therefore $\gb=(\bgb)_1 = 1/4$. Then consider $\rr=(1/4,1/2)$. This gives $(s_{\rr}, t_{\rr}) = (01001000,10000100)$. Therefore $s_{\rr}^\infty$ is admissible and indeed $\sigma s_{\rr}^\infty = \1$, meaning that $s_{\rr}^\infty = 1/\beta$. It follows that $s_{\rr} t_{\rr}^\infty$ is inadmissible. Therefore the maximal admissible descendant of balanced words is given by $\bgb = (1/4, 1/2)$.
\end{eg}

We can use the above definitions in the more general setting of maximal extremal pairs.
\begin{definition}[Farey descendants]
 Let $(s,t)$ be a maximal extremal pair. Then we say the pairs $(s_{\rr}, t_{\rr})$ given by \begin{align*}
s_{\rr} &= \rho_{r_1} \rho_{r_2} \dots \rho_{r_n} (s), \\
 t_{\rr} &= \rho_{r_1} \rho_{r_2} \dots \rho_{r_n} (t).
\end{align*}
are the \emph{Farey descendants} of $(s,t)$.
\end{definition}
Because of the tree construction, the results stated above specifically for balanced descendants then hold for the Farey descendants of any maximal extremal pair, by simply repeating all proofs with $s$ in place of $0$ and $t$ in place of $1$. No further modification is needed. We will use this later to describe the boundary of $D_1(\beta)$.

\subsection{Bad $n$}

As in \cite{haresidorov14}, we say that a natural number $n$ is \emph{bad} for $(a,b)$ if every periodic point of period $n$ for $\Tb$ intersects the hole $(a,b)$.

In the $\beta=2$ case, it is natural to discard $n=2$ as there is only one periodic point of period $2$, making for an uninteresting definition. As $\beta$ decreases, gradually each $n$ will have fewer periodic points and thus once we have only one periodic point of period $n$ remaining we wish to discard this $n$.

To see the admissibility of a periodic orbit, we consider its largest point; that is to say the maximal cyclic permutation. For each $n$, the first (smallest) periodic orbit is $(10^{n-1})^\infty$. Then it is easy to see that the next periodic orbit is given by $(10^{k-2}10^k)^\infty$ for even $n=2k$ and by $(10^{k-1}10^k)^\infty$ for odd $n=2k+1$. Thus we should discard each $n$ at $\1 = (10^{k-2}10^k)^\infty$ for even $n=2k$ and at $\1 = (10^{k-1}10^k)^\infty$ for odd $n=2k+1$. For each $\beta \in (1,2)$ let $N_\beta$ denote the least $n$ such that there exists at least two $n$-cycles for $\beta$.

 Then let $B_\beta(a,b)$ denote the set of $n>N_\beta$ such that $n$ is bad for $\Tb$. Then we define
\begin{equation*}
D_2(\beta) = \{ (a,b) : B_\beta(a,b) \text{ is finite}\}.
\end{equation*}

\section{Transfer of results from the doubling map} \label{transfersection}
In this section we transfer what results we can from the case of the doubling map as studied by Glendinning and Sidorov in \cite{glendinningsidorov15} and Hare and Sidorov in \cite{haresidorov14}.

\begin{remark} \label{cornerremark}
 We remark that for each $i \in \{0,1,2\}$, if $(a,b) \in D_i(\beta)$ then $(a+\epsilon, b-\epsilon) \in D_i(\beta)$ also. This is simply because the latter hole is contained in the former, thus $\J(a,b) \subseteq \J(a+\epsilon, b-\epsilon)$.
\end{remark}

\subsection{Large and small $a$ and $b$}
For the doubling map one may restrict to $(a,b) \in (1/4,1/2) \times (1/2,3/4)$ without losing any interesting behaviour. This section covers the analogue of this restriction for $\beta \in (1,2)$.

\begin{lemma}[Large $a$]
 If $a>\frac{1}{\beta}$ then $(a,b) \in D_2(\beta) \cap D_1(\beta) \cap D_0(\beta)$.
\end{lemma}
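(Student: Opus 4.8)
The plan is to show that if $a > 1/\beta$ then the hole $(a,b)$ sits entirely above the point $1/\beta$, which (via its quasi-greedy expansion) is essentially the largest point whose forward orbit can be controlled, and to exploit this to produce a large invariant set that avoids $(a,b)$. Recall that $1/\beta$ has the expansion $0\1$, since $\sigma(0\1) = \1$ is the quasi-greedy expansion of $1$. Concretely, I would first observe that any point $x \in (0,1)$ whose greedy expansion $(x_i)$ begins with $0$ satisfies $x < 1/\beta$, and more generally every $x$ with $x \le 1/\beta$ has greedy expansion $\preceq 0\1$. So the key geometric fact is: $(a,b) \subset (1/\beta, 1)$, hence any point that stays in $[0,1/\beta]$ under all iterates of $\Tb$ automatically avoids the hole.

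Next I would exhibit enough such points. The natural candidate set is $X = \{ x \in [0,1) : \Tb^n(x) \le 1/\beta \text{ for all } n \ge 0\}$, equivalently (in symbolic terms) the set of admissible sequences $(x_i) \in \Xb$ all of whose shifts are $\preceq 0\1$. Since $\Xb$ already requires all shifts to be $\preceq \1 = d_1 d_2 \dots$, and here we additionally demand they be $\preceq 0 d_1 d_2 \dots$, the relevant condition reduces to $x_1 = 0$ together with the shifted sequence lying in $\Xb$ again. In fact $X$ contains $0\Xb$ in an appropriate sense, but more usefully it contains the full orbit closure of $0^\infty$ and, crucially, it contains uncountably many points: for instance every sequence of the form in which $1$s are sufficiently sparse — say containing no two consecutive $1$s and with a $0$ following each block appropriately — will have all shifts below $0\1$ provided $\1 \succeq (10)^\infty$, i.e. $\beta \ge \varphi$; and for $\beta < \varphi$ one has even more room. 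Either way, $X$ is uncountable (it surjects onto $\{0,1\}^\N$ under a sparsification coding), giving $(a,b) \in D_1(\beta)$, and since $X \ne \emptyset$ we get $(a,b) \in D_0(\beta)$.

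For the $D_2(\beta)$ claim I would argue that for every sufficiently large $n$ there is a periodic point of period $n$ lying entirely in $[0,1/\beta]$, hence avoiding $(a,b)$, so only finitely many $n$ are bad. The cleanest choice: for each $n \ge N_\beta$, consider a periodic orbit with maximal cyclic permutation of the form $(10^{n-1})^\infty$ or, if one needs the orbit bounded away from $1$ more carefully, $(10^{k}10^{k'})^\infty$-type words from the list already given in the ``Bad $n$'' subsection — these are admissible for all $\beta$ once $n$ is past the threshold, and their maximal cyclic permutation starting with $1$ still has every cyclic shift $\preceq 0\1$ because after the single (or widely spaced) leading $1$s the orbit drops well below $1/\beta$. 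One must just check that the maximal element of such an orbit, namely the shift sending the orbit to its point closest to $1$, still satisfies $\sigma^j \preceq 0\1 \prec a$'s expansion. Since $a > 1/\beta$ strictly, there is $\epsilon > 0$ with $a > 1/\beta + \epsilon$, and all but finitely many of these sparse periodic points lie below $1/\beta$, so their whole orbits avoid $(a,b)$; thus $B_\beta(a,b)$ is finite.

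The main obstacle I anticipate is the bookkeeping for the $D_2$ part: one must verify that for every large $n$ there genuinely exists an $n$-periodic orbit admissible for the given $\beta$ whose largest point is $\le 1/\beta$, and handle the ambiguity between finite and infinite expansions at the boundary point $1/\beta = 0\1$ carefully (the lemma's hypothesis $a > 1/\beta$ is strict, which is exactly what rescues the boundary case). The $D_0$ and $D_1$ parts are comparatively soft once the observation $(a,b) \subset (1/\beta,1)$ is in place, reducing everything to producing an uncountable shift-invariant subset of $[0,1/\beta] \cap [0,1)$, which the admissibility characterisation of $\Xb$ makes routine.
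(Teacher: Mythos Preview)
Your proposal has a genuine gap in the $D_0$ and $D_1$ parts. You define $X = \{x : \Tb^n(x) \le 1/\beta \text{ for all } n \ge 0\}$ and claim it is uncountable, but in fact $X$ is countable (essentially $\{0\} \cup \{\beta^{-n} : n \ge 1\}$). On $[0, 1/\beta)$ the map $\Tb$ is simply $x \mapsto \beta x$, so any positive $x$ not of the form $\beta^{-n}$ is eventually pushed above $1/\beta$. Symbolically, requiring every shift of $(x_i)$ to be $\preceq 0\1$ forces every digit to be $0$, since $0\1$ begins with $0$; your reduction ``$x_1 = 0$ together with the shifted sequence lying in $\Xb$'' forgets that the shifted sequence must again have \emph{all} its shifts $\preceq 0\1$, not merely lie in $\Xb$. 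In particular your ``sparse $1$s'' sequences are not in $X$: any sequence containing a $1$ has a shift beginning with $1$, hence of value $\ge 1/\beta$.

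The fix --- and this is what the paper does --- is to exploit the strict inequality $a > 1/\beta$ rather than trying to stay below $1/\beta$ itself. Since $a$'s greedy expansion begins with $1$, write $a = 10^k 1\ldots$ for some $k \ge 0$. Then the subshift
\[
A = \{w \in \Xb : \text{every } 1 \text{ in } w \text{ is followed by at least } k+1 \text{ zeros}\}
\]
consists of points whose orbits stay below $a$ (not below $1/\beta$), hence avoid $(a,b)$. This $A$ has positive entropy, giving $D_1$ and $D_0$, and contains the periodic orbits $(10^m)^\infty$ for all $m > k+1$, giving $D_2$. Your $D_2$ sketch does eventually gesture at this (``$a > 1/\beta + \epsilon$''), but the $D_0/D_1$ argument as written fails until $1/\beta$ is replaced by $a$ throughout.
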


\begin{proof}
 Just as in the $\beta=2$ case, write $a= 1 0^k 1 \dots$ for some $k \geq 0$. Then consider the following subshift:
\begin{equation*}
 A = \{ w \in \Xb : w_i = 1 \implies w_{i+j} = 0 \text{ for } j = 1, \dots, k+1 \}.
\end{equation*}
Then $A \subset \J(a,b)$. $A$ contains periodic orbits $(10^m)^\infty$ for any $m>k+1$, thus $(a,b)$ has only finitely many bad $n$. $A$ also has positive entropy, therefore positive Hausdorff dimension, so $(a,b) \in D_1(\beta)$ and $(a,b) \in D_0(\beta)$ also.
\end{proof}

The restriction for small $b$ is more different from the $\beta=2$ case as it involves $\gb$. 

\begin{remark} \label{smallbd0rmk}
 By definition of infimum, we have that $X_{\gb} \subseteq \J(0, \infXgb)$. Therefore $(a,b) \in D_0(\beta)$ whenever $b < \infXgb$.
\end{remark}

\begin{lemma}[Small $b$] \label{smallb}
Suppose $\gb \in \Q$ and $\gamma_1$ is the left Farey parent of $\gb$. Let $u_{\gb}$ and $u_{\gamma_1}$ denote the minimal shifts of the balanced words associated to $\gb$ and $\gamma_1$ respectively. Then $(a,b) \in D_2(\beta) \cap D_1(\beta) \cap D_0(\beta)$ whenever $b < u_{\gamma_1} u_{\gb}^\infty$.
\end{lemma}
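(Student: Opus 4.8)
The plan is to prove something slightly stronger: for every hole $(a,b)$ with $b<u_{\gamma_1}u_{\gb}^\infty$ I would produce an uncountable subshift $A\subseteq\J(a,b)$ that moreover contains a periodic point of every sufficiently large period. Uncountability of $A$ gives $(a,b)\in D_1(\beta)$ and hence $(a,b)\in D_0(\beta)$ (in fact $A$ will have positive topological entropy), and the periodic points show that no large $n$ is bad, so $(a,b)\in D_2(\beta)$ as well. Thus the whole statement reduces to constructing $A$ and checking two things: that every sequence in $A$ is admissible, and that every shift of every sequence in $A$ avoids $(a,b)$.

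I would build $A$ from the balanced descendants lying between $\gamma_1$ and $\gb$. Write $\gb=p/q$ and let $q_1$ be the denominator of $\gamma_1$; since $\gamma_1,\gb$ are Farey neighbours, $\gcd(q_1,q)=1$. Let $\delta_k$ be the rational obtained from the Farey chain $\gamma_1\oplus\gb\oplus\cdots\oplus\gb$ with $k$ copies of $\gb$, so $\gamma_1<\delta_k<\gb$ and $w_{\delta_k}=w_{\gb}^{\,k}w_{\gamma_1}$; the identity $u_{\rho_1\oplus\rho_2}=u_{\rho_1}u_{\rho_2}$ for Farey‑neighbouring $\rho_1<\rho_2$ (proved exactly as its $w$‑analogue, with minimal in place of maximal cyclic shifts) gives by induction that the minimal cyclic shift of $w_{\delta_k}$ is $P_k:=u_{\gamma_1}u_{\gb}^{\,k}$. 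Because $\delta_k<\gb$, each balanced word $w_{\delta_k}$ is admissible, so $X_{\delta_k}\subseteq\Xb$ and in particular $P_k^\infty\in\Xb$. Fix a large integer $K$ (to be chosen) and let $A=A_K$ be the set of all infinite concatenations of words from $\{P_k:k\ge K\}$; distinct block sequences yield distinct points, so $A_K$ is uncountable, and using just the two blocks $P_K,P_{K+1}$ one sees that it has positive entropy.

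For $A_K\subseteq\J(a,b)$ I would argue as follows. Admissibility: every finite factor of an element of $A_K$ occurs inside some $w_{\delta}^\infty$ with $\delta<\gb$ — a factor straddling one $u_{\gamma_1}$‑seam occurs in $X_{\delta_m}$ with $m$ the larger of the two adjacent $u_{\gb}$‑run lengths, and a factor straddling several seams occurs in $X_\delta$ for the corresponding Farey combination of the relevant $\delta_{k_i}$, all still $<\gb$ and hence admissible — so $\Tb$‑admissibility is inherited. Hole‑avoidance: I claim that for $K$ large every shift $\sigma^m x$ of every $x\in A_K$ satisfies $\sigma^m x\ge b$, hence $\sigma^m x\notin(a,b)$. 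If $\sigma^m x$ begins at (or just after) a $u_{\gamma_1}$‑seam it agrees with $u_{\gamma_1}u_{\gb}^\infty$ for at least $q_1+Kq$ letters, and the first later discrepancy is a $u_{\gb}$‑letter $1$ meeting a $u_{\gamma_1}$‑letter $0$ (because $u_{\gamma_1}^\infty\prec u_{\gb}^\infty$), so $\sigma^m x\ge u_{\gamma_1}u_{\gb}^\infty-C\beta^{-(q_1+Kq)}$ for a constant $C=C(\beta)$; if it begins deep inside a $u_{\gb}$‑run it is a shift of $u_{\gb}^\infty$ perturbed only by later, strictly larger $u_{\gb}$‑letters replacing $u_{\gamma_1}$‑letters, so $\sigma^m x\ge u_{\gb}^\infty>u_{\gamma_1}u_{\gb}^\infty$; and a shift beginning in a short suffix of a $u_{\gb}$‑run just before a seam is, by the same ordering of balanced cyclic shifts together with the non‑overlap of the Farey intervals in Remark~\ref{fareydescs}, again $>u_{\gamma_1}u_{\gb}^\infty$. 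Choosing $K$ with $C\beta^{-(q_1+Kq)}<u_{\gamma_1}u_{\gb}^\infty-b$ closes all cases. Finally, for $D_2(\beta)$ the word $P_K^{\,a}P_{K+1}^{\,c}$ has length $ML+cq$ with $L:=q_1+Kq$, $M:=a+c$; the conditions $0\le c\le M$ and $q\mid(n-ML)$ read $M\in[\,n/(L+q),\,n/L\,]$ and $Mq_1\equiv n\pmod q$, and since $\gcd(q_1,q)=1$ and that interval has length at least $q$ once $n\ge L(L+q)$, every such $n$ is realised as the length of such a word, which may be taken primitive; as it lies in $A_K\subseteq\J(a,b)$, $n$ is not bad, so $B_\beta(a,b)$ is finite.

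The one genuinely delicate step is the hole‑avoidance estimate. The shifts in danger are exactly the ``seam'' shifts that land just before a $u_{\gamma_1}$‑insertion, and keeping all of them $\ge b$ is precisely what uses the hypothesis $b<u_{\gamma_1}u_{\gb}^\infty$ and fixes how large $K$ must be; the supporting lexicographic comparisons are elementary but must be organised using the structured non‑overlapping‑interval property of Remark~\ref{fareydescs} rather than crude bounds, and one must be careful that inequalities like $\sup X_{\gamma_1}<\inf X_{\gb}$ are \emph{false}, so only the orderings that genuinely hold (such as $u_{\gamma_1}^\infty\prec u_{\gb}^\infty$ and $u_{\gb}^\infty<\inf$ of the seam shifts) may be invoked.
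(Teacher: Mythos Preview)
Your approach is essentially identical to the paper's. The paper also builds the subshift of all concatenations of blocks $u_{\gamma_1}u_{\gb}^{\,k}$ with $k>K$, asserts that its least element (under the shift) is $(u_{\gamma_1}u_{\gb}^{\,K+1})^\infty>b$, and then invokes positive entropy for $D_1$ and $D_0$ and the coprimality of $|u_{\gamma_1}|$ and $|u_{\gb}|$ for $D_2$. You simply supply more scaffolding (identifying $P_k$ as the minimal cyclic shift of $w_{\delta_k}$, the explicit length computation for periods), which is fine.

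One small glitch: in your ``deep inside a $u_{\gb}$-run'' case you have the perturbation backwards. Compared with the corresponding shift of $u_{\gb}^\infty$, the actual sequence eventually has a $u_{\gamma_1}$ where $u_{\gb}^\infty$ has a $u_{\gb}$, so the perturbation is \emph{downward}, not upward; you cannot conclude $\sigma^m x\ge u_{\gb}^\infty$ from that comparison. The clean fix is the one the paper implicitly uses: argue directly that the infimum of all shifts of $B_K$ is $(u_{\gamma_1}u_{\gb}^{\,K+1})^\infty$, which follows because $P_{K+1}$ is itself the minimal cyclic shift of the balanced word $w_{\delta_{K+1}}$, and any element of $B_K$ differs from $(P_{K+1})^\infty$ only by having extra $u_{\gb}$'s in place of $u_{\gamma_1}$'s, which can only increase any shift. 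This subsumes your three-case analysis and avoids the need for the $\epsilon$-style estimate $C\beta^{-(q_1+Kq)}$ altogether.
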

\begin{proof}
Consider $b < u_{\gamma_1} u_{\gb}^\infty$. Then $b=u_{\gamma_1} u_{\gb}^K v$ for some $K$ and some $v\prec u_{\gb}$. Consider the following shift:
\begin{equation*}
B_K = \{ w \in \Xb : w \text{ is made of blocks } u_{\gamma_1} u_{\gb}^k  \text{ for any } k>K \}.
\end{equation*}
The least element of $B_K$ is $(u_{\gamma_1} u_{\gb}^{K+1})^\infty$. Therefore $B_K \subset \J(a,b)$. As $|u_{\gb}|$ and $|u_{\gamma_1}|$ are coprime and we are allowed any $k>K$, this shift $B_K$ will contain periodic orbits of any suitably long length. Thus $(a,b)$ has finitely many bad $n$. $B_K$ also has positive entropy, therefore positive Hausdorff dimension, so $(a,b) \in D_1(\beta)$ and $(a,b) \in D_0(\beta)$ also.
\end{proof}
\begin{corollary}
 Suppose $\gb \notin \Q$. Then $(a,b) \in D_2(\beta) \cap D_1(\beta) \cap D_0(\beta)$ whenever $b < \infXgb$.
\end{corollary}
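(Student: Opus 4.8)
The plan is to reduce the irrational case to a limiting argument using the rational case handled in Lemma~\ref{smallb}, together with Remark~\ref{smallbd0rmk} and Remark~\ref{cornerremark}. The key observation is that when $\gb \notin \Q$, the quasi-greedy expansion satisfies $\1 = w_{\gb}$, a Sturmian word, and $w_\gb = \infXgb$ is the supremum of the Cantor set $X_\gb$. So fix $(a,b)$ with $b < \infXgb = w_\gb^\infty$ (writing the Sturmian supremum in infinite form). Since $b$ is strictly less than this value, I can find a rational $\gamma \prec \gb$ lying in the Farey tree \emph{close enough} to $\gb$ that the balanced pair data for $\gamma$ still dominates $b$; more precisely, I want to locate a rational $\gamma$ on the path of the Farey tree converging to the irrational $\gb$ such that $b < u_{\gamma_1} u_\gamma^\infty$, where $\gamma_1$ is the left Farey parent of $\gamma$.

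First I would verify that such a $\gamma$ exists. The rationals $p/q$ approximating the irrational $\gb$ from below have $w_{p/q}^\infty \nearrow w_\gb^\infty$, and the associated minimal shifts $u_{p/q}$ converge (in the appropriate sense) to the Sturmian word; in particular $u_{\gamma_1} u_\gamma^\infty \to w_\gb^\infty = \infXgb$ as $\gamma \nearrow \gb$ along the tree. Hence for $b < \infXgb$ there is $\varepsilon > 0$ with $b < \infXgb - \varepsilon$ (in the metric on $\Sigma$, or equivalently as real numbers), and for $\gamma$ sufficiently far down the Farey path we get $b < u_{\gamma_1} u_\gamma^\infty$. I must also check that this $\gamma$ is admissible, i.e. $\gamma \le \gb$; this is immediate since $\gamma < \gb$ and admissibility of $w_\gamma^\infty$ is equivalent to $\gamma \le \gb$ by the discussion following Lemma~\ref{gbdescription}. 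Second, with such a $\gamma$ fixed and admissible, I apply exactly the construction of Lemma~\ref{smallb}: set $B_K = \{ w \in \Xb : w \text{ made of blocks } u_{\gamma_1} u_\gamma^k, \ k > K\}$ where $K$ is chosen so that $b = u_{\gamma_1} u_\gamma^K v$ with $v \prec u_\gamma$. The least element of $B_K$ is $(u_{\gamma_1} u_\gamma^{K+1})^\infty \succ b$, so no shift of any element of $B_K$ enters $(a,b)$, giving $B_K \subset \J(a,b)$; coprimality of $|u_\gamma|$ and $|u_{\gamma_1}|$ yields periodic orbits of all sufficiently large lengths (so only finitely many bad $n$), and positive entropy of $B_K$ gives positive Hausdorff dimension, hence uncountability and nonemptiness. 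Therefore $(a,b) \in D_2(\beta) \cap D_1(\beta) \cap D_0(\beta)$.

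The main obstacle I anticipate is the first step: making precise that $u_{\gamma_1} u_\gamma^\infty \to \infXgb$ as $\gamma \nearrow \gb$ along the correct branch of the Farey tree, and in particular choosing the branch correctly. The subtlety is that the rationals converging to $\gb$ from below are a specific sequence of Farey children, and I need their \emph{right} endpoints $u_{\gamma_1}u_\gamma^\infty$ (not $u_\gamma^\infty$) to approach the Sturmian value; this requires tracking which of the two parents is the "left" one along the path and using the nesting/non-overlapping structure of the intervals $[u_{\gamma}^\infty, u_{\gamma_1}u_\gamma^\infty]$ from the Remark after Lemma~\ref{gbdescription} (equivalently \cite[Lemma 3.2]{haresidorov14}). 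Once the convergence of these endpoints to $\infXgb$ is established, the rest is a direct quotation of Lemma~\ref{smallb}'s argument with no new ideas needed.
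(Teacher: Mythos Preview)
Your proposal is correct and follows essentially the same route as the paper: approximate the irrational $\gb$ from below by rationals $\gamma$ along the Farey tree, use that the left Farey parent $\gamma_1$ also tends to $\gb$ so that $u_{\gamma_1}u_\gamma^\infty \nearrow \infXgb$, and then invoke the construction of Lemma~\ref{smallb} verbatim for this admissible rational $\gamma$. One small slip to fix: you write ``$w_{\gb} = \infXgb$'', but $w_{\gb}$ is the \emph{supremum} of $X_{\gb}$, not the infimum; this does not affect your argument, since the limit you actually need and use is $\infXgb$, and the convergence $u_{\gamma_1}u_\gamma^\infty \to \infXgb$ is exactly the one the paper asserts.
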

\begin{proof}
 This follows by approximating $\gb$ from below by rationals. As $\gamma \nearrow \gb$, notice that the left Farey parent $\gamma_1$ of $\gamma$ will also tend to $\gb$. Therefore $u_{\gamma_1} u_\gamma^\infty \nearrow \infXgb$.
\end{proof}

We also note the following lemma:
\begin{lemma} \label{Xgbhole}
  $\J(0 \infXgb, \infXgb) \doteq X_{\gb}$.
\end{lemma}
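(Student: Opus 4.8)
The plan is to show both inclusions implicit in the relation $\doteq$: first that every point of $X_{\gb}$ lies in $\J(0\,\infXgb, \infXgb)$ (which is essentially Remark~\ref{smallbd0rmk}, since every element of $X_{\gb}$ is a shift of the maximal balanced word and hence lies at or above $\infXgb$, with none of its shifts falling strictly between $0\,\infXgb$ and $\infXgb$), and second that every point $y \in \J(0\,\infXgb, \infXgb)$ has the form $wx$ for some finite word $w$ and some $x \in X_{\gb}$. The second inclusion is the substance of the lemma, and it mirrors the argument of Lemma~\ref{maxisnice}: I would approximate an arbitrary orbit-avoiding point $y$ by periodic points and argue that, because the hole $(0\,\infXgb, \infXgb)$ removes everything strictly below $\infXgb$ except the single ``landing value'' $0\,\infXgb$ and its own shift, the only orbits that survive are those which, after finitely many steps, sit entirely inside $X_{\gb}$.

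The key steps, in order. First I would unpack what avoiding the hole $(0\,\infXgb, \infXgb)$ means symbolically: a sequence $(x_i) \in \Xb$ avoids it iff for every $n$, $\sigma^n (x_i) \notin (0\,\infXgb, \infXgb)$, i.e.\ either $\sigma^n(x_i) \preceq 0\,\infXgb$ (forcing $\sigma^n(x_i)$ to begin with $0$ and be bounded above by $0\,\infXgb$, hence $\sigma^{n+1}(x_i) \preceq \infXgb$) or $\sigma^n(x_i) \succeq \infXgb$. Combining these, every shift $\sigma^m(x_i)$ is either $\succeq \infXgb$ or is the specific sequence $0\,\infXgb$. Second, I would recall from the preceding subsection that $X_{\gb}$ is exactly the set of sequences all of whose shifts are $\succeq \infXgb = \inf X_{\gb}$ and $\preceq \sup X_{\gb} = w_{\gb}^\infty$ (using admissibility to get the upper bound for free, since $w_{\gb}^\infty \preceq \1$), so a point whose \emph{every} shift is $\succeq \infXgb$ lies in $X_{\gb}$. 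Third, for a general $y \in \J(0\,\infXgb,\infXgb)$, I would locate the last index at which a shift equals $0\,\infXgb$: if no shift of $y$ equals $0\,\infXgb$, then every shift is $\succeq \infXgb$ and $y \in X_{\gb}$ directly (take $w$ empty); otherwise let $w = y_1\dots y_m$ be the prefix up to and including the last such occurrence, so that $\sigma^m y = 0\,\infXgb$ and then $\sigma^{m+1}y = \infXgb = w_{\gb}^\infty \in X_{\gb}$ — wait, more carefully, after the last ``bad-adjacent'' position every further shift must be $\succeq \infXgb$, so $\sigma^{m}y \in X_{\gb}$ and we write $y = w x$ with $x = \sigma^m y$.

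The main obstacle I anticipate is the bookkeeping around the boundary value $0\,\infXgb$ and the interplay with admissibility and the finite-vs-infinite expansion convention flagged earlier in the paper (a point with a finite greedy expansion $u10^\infty$ versus its representation $u0\1$). In particular I need to be sure that $\infXgb$ is genuinely attained or approached correctly inside $\Xb$, that $w_{\gb}^\infty$ really is admissible (it is, by definition of $\gb$), and that the ``last occurrence of $0\,\infXgb$ among the shifts'' is well defined — a priori a shift could equal $0\,\infXgb$ infinitely often, but then the tail would be periodic with period dividing $|w_{\gb}| \cdot$ something and one checks this still lands in $X_{\gb}$, or one argues it cannot happen because $0\,\infXgb$ is not itself a shift of a sequence all of whose later shifts are $\succeq \infXgb$ unless $\infXgb$ begins with $0$, which for the relevant balanced words it does not (the minimal shift $u_{\gb}$ of $w_{\gb}$ begins with $0$, but $\infXgb$ in the ordering is $u_{\gb}^\infty$, so actually $0\,\infXgb = 0 u_{\gb}^\infty$ is itself a shift of $(0u_{\gb})$-type sequences — this needs a clean resolution). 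I would handle this by showing directly that once a shift equals $0\,\infXgb$, the \emph{next} shift is $\infXgb = u_{\gb}^\infty$, which is visibly in $X_{\gb}$, so the tail is fixed from that point and the decomposition $y = wx$ with $x \in X_{\gb}$ holds regardless; the finitely many earlier $0\,\infXgb$-occurrences just get absorbed into the prefix $w$.
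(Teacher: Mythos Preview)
Your approach is genuinely different from the paper's. The paper first establishes $\J(0,\infXgb)\doteq X_{\gb}$ by invoking the subshifts $B_K$ from Lemma~\ref{smallb} and letting $K\to\infty$ (so that $B_K$ collapses to $X_{\gb}$), and only then handles points below $0\,\infXgb$ by the preimage dichotomy $x\in(0^n\infXgb,0^{n-1}\infXgb)$. You instead try a direct coding argument: every shift of a survivor is either $\succeq\infXgb$ or $\preceq 0\,\infXgb$, and you want to conclude that the tail eventually sits in $X_{\gb}$.

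The gap is in your second step. You assert that ``$X_{\gb}$ is exactly the set of sequences all of whose shifts are $\succeq \infXgb$ and $\preceq w_{\gb}^\infty$'', and then claim the upper bound comes ``for free'' from admissibility because $w_{\gb}^\infty\preceq\1$. But Lemma~\ref{gbdescription} says $\1$ ranges over the whole interval $[w_{\gb}^\infty,\,w_{\gamma_2}w_{\gb}^\infty]$, so for generic $\beta$ on this plateau one has $\1\succ w_{\gb}^\infty$ strictly, and admissibility gives only $\sigma^k y\preceq \1$, not $\sigma^k y\preceq w_{\gb}^\infty$. Concretely, the sequence $\1=w_{\gamma_2}w_{\gb}^\infty$ itself (and many nearby admissible sequences) has every shift $\succeq\infXgb$ yet lies strictly above $w_{\gb}^\infty$; such sequences are not in $X_{\gb}$ but only preimages of it, and proving that they are \emph{always} preimages is exactly the substantive claim. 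The paper's $B_K$ limit is doing real work here: it shows that anything avoiding $(0,u_{\gamma_1}u_{\gb}^\infty)$ already lies, up to preimages, in $\bigcap_K B_K = X_{\gb}$, which absorbs precisely the ``extra room'' between $w_{\gb}^\infty$ and $\1$. Your argument would need an independent proof that an admissible sequence with all shifts $\succeq u_{\gb}^\infty$ has a tail equal to some $\sigma^n u_{\gb}^\infty$; this is true, but it is neither stated in the paper nor established by the appeal to admissibility you give.
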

\begin{proof}
We show this only for rational $\gb$ as the irrational case is largely similar. As per Remark \ref{smallbd0rmk}, we know $X_{\gb} \subseteq \J(0, \infXgb)$.
Consider $B_K$ as defined in Lemma \ref{smallb}. As $K \to \infty$, or equivalently as the right endpoint of the hole $(a,b)$ tends to $u_{\gamma_1} u_{\gb}^\infty$, $B_K \to { \sigma^n u_{\gb}^\infty} = X_{\gb}$. Therefore the hole $(0, u_{\gamma_1} u_{\gb}^\infty)$ has avoidance set essentially equal to $X_{\gb}$. We have $u_{\gamma_1} u_{\gb}^\infty < \infXgb$, so this implies that $\J(0, \infXgb) \doteq X_{\gb}$ also.

 This means we only need consider $x< 0 \infXgb$. In this case we have that either $x = 0^n \infXgb$ for some $n>1$, or $x \in (0^n \infXgb, 0^{n-1} \infXgb)$ for some $n>1$. Then $\sigma^{n-1}x \in (0 \infXgb, \infXgb)$ as required. Thus $\J(0 \infXgb, \infXgb) \doteq X_{\gb}$.
\end{proof}
\begin{corollary}
  $\J(1/\beta, 1 \infXgb) \doteq X_{\gb}$.
\end{corollary}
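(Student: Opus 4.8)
The plan is to deduce the corollary from Lemma~\ref{Xgbhole} by exhibiting a conjugacy (essentially a reflection of the alphabet) that sends the hole $(0\infXgb, \infXgb)$ to the hole $(1/\beta, 1\infXgb)$ while preserving the avoidance set up to the $\doteq$ relation. Concretely, I would first observe that the interval $(1/\beta, 1)$ is precisely the set of $x \in \Xb$ whose greedy expansion begins with $1$, and that on this set $\Tb$ acts as the shift $\sigma$ after deleting the leading $1$. Thus describing $\J(1/\beta, 1\infXgb)$ amounts to describing which sequences $w \in \Xb$ have the property that whenever $\sigma^n w$ begins with $1$, in fact $\sigma^n w \succeq 1\infXgb$ — equivalently, $\sigma^n w \notin (1/\beta, 1\infXgb)$ for all $n$. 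The point is that this is the ``upper'' analogue of the condition defining $\J(0\infXgb, \infXgb)$, which forbids orbit points in the interval $(0\infXgb, \infXgb)$ lying just above $0$.

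Next I would make this symmetry precise. Note $1\infXgb = 1 u_{\gb}^\infty$ wait — more carefully, I would use that the left endpoint $\infXgb$ of $X_{\gb}$ and the word $1\infXgb$ are related to the extremal pair $(s_{\gb}, t_{\gb})$ by $s_{\gb}^\infty = \infXgb$ (the minimal shift, i.e. $0$-$\max$ applied appropriately) wait — in the notation of the paper $s_{\gb} = 0\text{-}\max(w_{\gb})$, so $s_{\gb}^\infty$ is not the minimum; the minimum $u_{\gb}^\infty$ is $1\text{-}\min(w_{\gb})^\infty$, no — let me just say: I would identify $\infXgb$ with $s_{\gb}^\infty$ where $s_\gb$ begins with $0$, and identify $1\infXgb$ with $t_{\gb}^\infty$ (shifted appropriately), using that $(s_{\gb}, t_{\gb})$ is an extremal pair and hence the orbit of $X_{\gb}$ is disjoint from $(s_{\gb}^\infty, t_{\gb}^\infty) \supseteq (0\infXgb, \infXgb) \cup (1/\beta, 1\infXgb)$ — the key structural fact being that $(0\infXgb,\infXgb)$ and $(1/\beta, 1\infXgb)$ are the two ``gaps'' adjacent to $X_{\gb}$ at the bottom and top of the orbit. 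Then the argument mirrors the proof of Lemma~\ref{Xgbhole}: the shift $B_K$ constructed there already avoids both gaps, so $X_{\gb} \subseteq \J(1/\beta, 1\infXgb)$ and in the limit $B_K \to X_{\gb}$; conversely any $x$ avoiding $(1/\beta, 1\infXgb)$ either lies in $X_{\gb}$ up to a prefix, or some iterate $\sigma^n x$ lands in $[1/\beta, 1)$ and begins with $1$, in which case one shifts past the $1$ and reduces to a point avoiding $(0\infXgb, \infXgb)$, to which Lemma~\ref{Xgbhole} applies.

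I would organise the write-up as: (i) reduce to rational $\gb$, with the irrational case ``largely similar'' as before; (ii) establish $X_{\gb} \subseteq \J(1/\beta, 1\infXgb)$ via the subshift $B_K$; (iii) show conversely that any $x \in \J(1/\beta, 1\infXgb)$ satisfies $x = wx'$ with $x' \in X_{\gb}$, by casework on the first index $n$ (if any) with $\sigma^n x \in [1/\beta, 1)$ and then invoking Lemma~\ref{Xgbhole}. The main obstacle I anticipate is step (iii): one must check that removing a leading $1$ from an iterate really does produce a point still avoiding $(0\infXgb,\infXgb)$ and not merely avoiding $(1/\beta, 1\infXgb)$, which requires knowing that there is no orbit point strictly between $\infXgb$ and $1/\beta$ other than those in $X_{\gb}$ — i.e. that $(\infXgb, 1/\beta)$ contains no further admissible avoidance behaviour. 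This is exactly the content of $(s_{\gb},t_{\gb})$ being the extremal pair with the relevant maximality, so the corollary should follow cleanly once that is cited; the only genuine care needed is bookkeeping with the finite-versus-infinite expansion convention at the endpoints $1/\beta$ and $\infXgb$.
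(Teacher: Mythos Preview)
Your approach is far more elaborate than necessary, and the initial framing via a ``reflection of the alphabet'' is simply wrong: for $\beta<2$ the greedy shift space $\Xb$ has no $0\leftrightarrow 1$ symmetry, so no such conjugacy exists. You notice this yourself midway through and abandon the idea, but it colours the whole write-up.

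The paper's proof is one line: the interval $(1/\beta,\,1\infXgb)$ is the $1$-branch preimage of $(0,\infXgb)$ under $T_\beta$, and the previous lemma already established $\J(0,\infXgb)\doteq X_{\gb}$ as an intermediate step. Concretely, $T_\beta^{-1}(0,\infXgb)=(0,0\infXgb)\cup(1/\beta,1\infXgb)$, and the first piece sits inside $(0,\infXgb)$ itself. Hence if $x\in\J(1/\beta,1\infXgb)$ and some iterate $T_\beta^m x$ lies in $(0,\infXgb)$ with $m$ minimal, then either $m=0$ or $T_\beta^{m-1}x\in(1/\beta,1\infXgb)$, a contradiction; so either $x\in\J(0,\infXgb)$ already, or $x\in(0,\infXgb)$ and one shifts forward and repeats. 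This is exactly the content of the $\doteq$ relation.

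Your step (iii) is essentially this argument in disguise, so the proposal would ultimately succeed, but you have buried the single relevant observation (preimage) under a layer of spurious symmetry, an unnecessary rerun of the $B_K$ construction, and a tangle over which of $s_{\gb},t_{\gb},u_{\gb}$ is which. Drop steps (i) and (ii) entirely, discard the reflection language, and state the preimage relation directly; the corollary then follows in two sentences.
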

\begin{proof}
 This hole is a preimage of $(0, \infXgb)$ which by the previous lemma has avoidance set $X_{\gb}$.
\end{proof}

These results describe what are essentially the easy cases, where $a\geq 1/\beta$ or $b\leq \infXgb$. The interesting behaviour that is more difficult to describe thus occurs within this region for $(a,b)$:
\begin{equation*}
I_\beta = (0 \infXgb, 1/\beta) \times (\infXgb, 1 \infXgb).
\end{equation*}

Notice that as $\beta \to 2$, $I_\beta$ approaches as expected the $(1/4,1/2) \times (1/2,3/4)$ region seen for the doubling map. However as $\beta \to 1$, $I_\beta \to~(0,1)^2$.

\subsection{Extremal pairs}
We now commence to transfer results from the doubling map. Essentially, if an extremal pair is admissible for a given $\beta$, then all results involving that extremal pair will still hold for that $\beta$. We formalise this as follows.

\begin{theorem} \label{TransferLemma}
Let $\beta \in (1,2)$. Suppose $(s,t)$ is an extremal pair such that $\{s,t\}^\N \subset \Xb$ and $(s^\infty, t^\infty) \in I_\beta$. Let $u$ and $v$ be words such that $s=uv$ and $t=vu$. Then for any $\epsilon>0$, we have
\begin{enumerate}
\item $\J(s^\infty, t^\infty) \supseteq \{ \sigma^n s^\infty : n \geq 0 \}$,
\item $\J(s^\infty, ts^\infty - \epsilon)$ and $\J(st^\infty+\epsilon, t^\infty)$ are uncountable.
\end{enumerate}
If additionally $j=|u|$ and $q=|s|$ are coprime, then
\begin{enumerate}\addtocounter{enumi}{2}
 \item $(s^\infty, ts^\infty - \epsilon)$, $(st^\infty+\epsilon, t^\infty)$ and $(st^\infty+\epsilon, ts^\infty - \epsilon)$ have finitely many bad $n$.
\end{enumerate}
\noindent
If $(s,t)$ is a maximal extremal pair, then
\begin{enumerate}\addtocounter{enumi}{3}
\item $\J(s^\infty, t^\infty) \doteq \{ \sigma^n s^\infty : n \geq 0 \}$.
\end{enumerate}
\noindent
If $(s,t)$ is the Farey descendant of a maximal extremal pair, then
\begin{enumerate}\addtocounter{enumi}{4}
 \item $\J(s^\infty, t^\infty)$ is countable,
 \item $\J(sts^\infty, ts^\infty)$ and $\J(st^\infty, tst^\infty)$ are countable.
\end{enumerate}
\end{theorem}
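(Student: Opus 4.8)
All six assertions are the $\beta$-analogues of facts established for the doubling map by Glendinning and Sidorov \cite{glendinningsidorov15} (parts (1), (2), (4)--(6)) and by Hare and Sidorov \cite{haresidorov14} (part (3)), and the plan is to show that the proofs there are essentially combinatorial and carry over once the sequences they manipulate are known to lie in $\Xb$. The two standing hypotheses are exactly what supplies this: $\{s,t\}^\N\subset\Xb$ makes admissible everything obtained by concatenating the blocks $s$ and $t$ (and, $\Xb$ being shift-invariant, all shifts of such concatenations), while $(s^\infty,t^\infty)\in I_\beta$ keeps the endpoints inside the admissible range --- in particular $0\infXgb<s^\infty$ and $t^\infty<1\infXgb\preceq\1$ --- so that the ``boundary words'' over $\{0,1\}$ (balanced words and their Farey descendants) that the doubling-map proofs employ are themselves admissible here.

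The engine behind every part is the coding $\rho\colon\{0,1\}^\N\to\Xb$, $0\mapsto s$, $1\mapsto t$, where $q=|s|=|t|$. The one point at which the \emph{definition} of extremal pair is used is that for every $w$ and every $k$ not divisible by $q$ one has $\sigma^k\rho(w)\preceq s^\infty$ or $\sigma^k\rho(w)\succeq t^\infty$: the offset shifts of an $\{s,t\}$-concatenation fall outside $(s^\infty,t^\infty)$. Hence, for any hole $(a,b)\subseteq[s^\infty,t^\infty]$, an $\{s,t\}$-concatenation avoids $(a,b)$ iff all of its $\Tb^q$-iterates do, and those are again $\rho$ of shifts of $w$; so $\rho$ transports doubling-map survivor sets into $\Tb$-survivor sets, with admissible image by the first hypothesis.

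For part (1): ``$\sigma^k s^\infty\preceq s^\infty$ or $\succeq t^\infty$'' says exactly that the $\Tb$-orbit of $s^\infty$ avoids $(s^\infty,t^\infty)$, and $s^\infty$ --- hence every $\sigma^ns^\infty$ --- lies in $\Xb$. For the uncountability in (2) and for (3), I would quote the corresponding doubling-map results: in \cite{glendinningsidorov15} one exhibits, inside the relevant doubling-map survivor set, a positive-entropy subshift (uncountability), and in \cite{haresidorov14} one exhibits hole-avoiding periodic orbits of every sufficiently large period (only finitely many bad $n$). The second uses coprimality of $j=|u|$ and $q=|s|$: the cyclically balanced necklaces that avoid the hole have lengths whose residue mod $q$ is governed by $|u|$ and $|v|=q-j$, and $\gcd(j,q)=1$ lets these sub-block lengths realise all large periods in every residue class. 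Transporting these sets through $\rho$ (adjusting $\epsilon$ by the block length) yields the required subsets and orbits for $\Tb$, admissible by the hypotheses, since $\rho$ preserves periodicity and the arithmetic of periods. Part (4) is the forward implication of Lemma~\ref{maxisnice} applied to the maximal pair $(s,t)$. For (5) and (6), at the symbolic level $\J$ of each hole is contained in the corresponding doubling-map avoidance set, which \cite{glendinningsidorov15} proves countable for Farey descendants, so the $\beta$-version, a subset of something countable, is countable.

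The part I expect to be the real obstacle is the admissibility bookkeeping for the uncountability in (2) and for the countability in (5)--(6). The holes in (2), $(s^\infty,ts^\infty-\epsilon)$ and $(st^\infty+\epsilon,t^\infty)$, reach the endpoints $s^\infty$ and $t^\infty$: an $\{s,t\}$-concatenation having an $s$-block immediately followed by a $t$-block has a shift lying strictly between $s^\infty$ and $ts^\infty$ (use $t\succ s$ as words, then $s\prec t$ on the first $|u|+1$ letters), hence in the hole once $\epsilon$ is small; so the surviving positive-entropy set is not $\rho$ of a $\{0,1\}$-subshift but is built from balanced words and descendants over the original alphabet, staying just clear of the shrunken endpoint, and one must check that this whole subshift --- \emph{not} merely its block-aligned part --- embeds into $\Xb$, which is where the $I_\beta$ bound $t^\infty<1\infXgb\preceq\1$ does its work. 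Likewise for (5)--(6) one must first show, via the ``essentially equal'' machinery of Lemma~\ref{maxisnice} and the lower endpoint $0\infXgb<s^\infty$, that every survivor is, up to a finite prefix, an $\{s,t\}$-concatenation before the doubling-map countability can be invoked. Everything else is a transcription of the $\beta=2$ arguments with $s$ and $t$ in place of $0$ and $1$.
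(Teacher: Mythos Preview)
Your handling of (1) and (4) matches the paper, and for (2)--(3) the paper does exactly what you outline in spirit: it reproduces the \cite{glendinningsidorov15}/\cite{haresidorov14} constructions verbatim, building the subshift $W_N$ generated by blocks $ut^m$ with $m>N$ and using the identity $ut^m=s^mu$ to check that every relevant shift avoids the hole. Two small corrections to your last paragraph: the surviving subshift is this $W_N$, not something ``built from balanced words and descendants over the original alphabet''; and its admissibility comes from the hypothesis $\{s,t\}^\N\subset\Xb$ (every factor of a $W_N$-word already occurs in some $\{s,t\}$-concatenation), not from the $I_\beta$ inequality $t^\infty\prec 1\infXgb$.

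There is a genuine gap in your argument for (5) and (6). You want to use the symbolic containment $\J_\beta\subset\J_2$ and then quote the countability result from \cite{glendinningsidorov15}. But that result is stated for Farey descendants of pairs that are maximal \emph{for the doubling map}, i.e.\ balanced pairs, whereas the hypothesis of the theorem is that $(s,t)$ is a Farey descendant of a pair maximal \emph{for $\Tb$}. Section~\ref{rbeta} of this paper produces many such pairs that are unbalanced --- for instance $(s_0,t_0)=(00001001,00010010)$ when $\1=(10010000)^\infty$ --- and for these the doubling-map survivor set $\J_2(s_0^\infty,t_0^\infty)$ is uncountable (any binary sequence containing no factor $000$ avoids the hole), so the containment buys you nothing. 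Your fallback idea of first reducing every survivor to an $\{s_0,t_0\}$-concatenation and then applying $\rho^{-1}$ runs into the same problem: the ``doubling-map countability'' you would then invoke is again the unproved statement for the pair $(s_0,t_0)$.

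The paper avoids this by arguing intrinsically to $\Tb$. It proves (5) by induction on the Farey level: the base case is item (4), which holds for the pair maximal for $\Tb$; the inductive step passes from $(s_k,t_k)$ to $(s_{k+1},t_{k+1})$ using that balanced words in the alphabet $\{s_k,t_k\}$ give \emph{ordered} orbits under $\sigma^{|s_k|}$, so iterating $\sigma^{|s_k|}$ forces all but countably many points of $(s_k^\infty,t_k^\infty)$ into $(s_{k+1}^\infty,t_{k+1}^\infty)$. Item (6) then follows from (5) by repeatedly applying $\sigma^q$ to the intervals $[ts^\infty,t^\infty]$ and $[s^\infty,sts^\infty]$.
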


\begin{proof}
These results are shown for the case of balanced pairs and their descendants in \cite{glendinningsidorov15} and \cite{haresidorov14}. We collect the results together here in a bid to make clearer precisely what combinatorial property of words each result is relying upon, and so for the sake of clarity we repeat the arguments here and alter them as necessary to encompass the general case.

Let $(s,t)$ be an extremal pair. Item (1) -- that $\{ \sigma^n s^\infty : n \geq 0 \} \subseteq \J(s^\infty, t^\infty)$ -- follows immediately from the definition.

For item (2), to show that $\J(s^\infty, ts^\infty - \epsilon)$ is uncountable, we follow \cite[Lemma 2.2]{glendinningsidorov15}. Let $N \in \N$ and define
\begin{equation*}
 W_N = \overline{\{ \sigma^i w : w \text{ is composed of blocks } u t^m \text{ with } m>N \}}.
\end{equation*}
Because $\{s,t\}^\N$ is admissible, we know that $W_N$ is admissible for all $N$. Furthermore $W_N$ is shift invariant and has positive entropy (and therefore positive Hausdorff dimension).
For any $\epsilon>0$, there exists an $N$ such that $ts^\infty - \epsilon <  ts^N$. Then we claim $W_N \subset \J(s^\infty, ts^\infty - \epsilon)$.
To see this, notice that $ut^m = s^mu$. By extremality, the only shifts we need be concerned about are those beginning with $s$ or $t$. Any shift beginning $s$ will be of the form $s^i uu \prec s^\infty$, so avoids the hole. Any shift beginning $t$ either has multiple $t$s and so avoid the hole, or begins $ts^muu$ for some $m>N$. This therefore also avoids the hole for large enough $N$.

The case $\J(st^\infty+\epsilon, t^\infty)$ is similar, using shifts with $t^mv = vs^m$. Either of these shifts will then avoid $(st^\infty+\epsilon, ts^\infty - \epsilon)$. This leads immediately to item (3), following \cite[Theorem 3.6]{haresidorov14}. Notice that the orbit $(ut^m)$ has period $m q + j$. If $j$ and $q$ are coprime, then for every $\ell$ there exists $k$ such that $\ell \equiv kj \mod q$. So by considering points of the form
\begin{equation*}
 w = (ut^{m_1}ut^{m_2} \dots ut^{m_k})^\infty,
\end{equation*}
for sufficiently large $m_i>N$, we can create orbits of any sufficiently large length which avoid the hole. Thus whenever $|u|$ and $|s|$ are coprime, we have that $(s^\infty, ts^\infty - \epsilon)$, $(st^\infty+\epsilon, t^\infty)$ and $(st^\infty+\epsilon, ts^\infty - \epsilon)$ have finitely many bad $n$.

Item (4) is a restatement of Lemma~\ref{maxisnice}.

For (5), we follow \cite[Lemma 2.12]{glendinningsidorov15} and use induction to show that $\J(s^\infty, t^\infty)$ is countable for Farey descendants of maximal extremal pairs. The result clearly holds for the maximal extremal pair itself. Assume the claim holds for all $k$th level descendants, meaning pairs $(s_k, t_k) = (s_{(r_1, \dots r_k)}, t_{(r_1, \dots r_k)})$. We show it must then hold for the $(k+1)$st level. Write $r_{k+1}=p_{k+1}/q_{k+1}$.
Note firstly that as $\J(s_k^\infty, t_k^\infty)$ is countable, we wish to show that all but countably many points of $(s_k^\infty, t_k^\infty)$ must fall into $(s_{k+1}^\infty, t_{k+1}^\infty)$.

Any word $(s_{k+1}, t_{k+1})$ is by definition a balanced word on the alphabet $\{s_k, t_k\}$, with length in this alphabet $q_{k+1}$. The only shifts of $s_{k+1}$ that fall into $(s_{k+1}^\infty, t_{k+1}^\infty)$ are those beginning with $s_k$ or $t_k$. Label these (in order) as $x_1, \dots, x_{q_{k+1}}$. Balanced words correspond to \emph{ordered} orbits as discussed in \cite{bullettsentenac94} and \cite{goldbergtresser96}. This means that any interval $[x_i, x_{i+1}]$ will be mapped by $\sigma^{q_{k+1}}$ to some other interval $[x_j, x_{j+1}]$, and by repeatedly applying $\sigma^{q_{k+1}}$ we will cycle through all possible $j\in \{1, \dots, q_{k+1}-1 \}$. One of these intervals is $[s_{k+1}^\infty, t_{k+1}^\infty]$. Therefore all but countably many points in $(x_1, x_{q_{k+1}})$ will fall into $(s_{k+1}^\infty, t_{k+1}^\infty)$.

The only remaining possibilities are points in $(s_k^\infty, x_1)$ and points in $(x_{q_{k+1}}, t_k^\infty)$. Applying $\sigma^{q_{k+1}}$ to these intervals maps them to $(s_k^\infty, x_i)$ and $(x_j, t_k^\infty)$ respectively for some $i>1$ and $j<q_{k+1}$. Thus again by applying $\sigma^{q_{k+1}}$ repeatedly we see that all but countably many points must fall into $(s_{k+1}^\infty, t_{k+1}^\infty)$.

Therefore item (5) holds and $\J(s^\infty, t^\infty)$ is countable for Farey descendants of maximal extremal pairs.

The final item (6) is more complex, with a degree of subtlety as to why the result holds only for Farey descendants, not for either all extremal pairs or only maximal extremal pairs.\footnote{Note we consider maximal extremal pairs to be Farey descendants of themselves, so this result \emph{does} hold for maximal extremal pairs.} We follow \cite[Theorem 2.13]{glendinningsidorov15}. Suppose $(s,t)$ is a Farey descendant of a maximal extremal pair $(u,v)$. Consider $\J(s^\infty, t^\infty)$. This is a countable subset of $\{u,v\}^\N$.

Then take any point in $[ts^\infty, t^\infty]$. By applying $\sigma^q$ repeatedly we can see that all but countably many points in this interval must fall into $(s^\infty, ts^\infty)$. Hence $\J(s^\infty, ts^\infty) \setminus \J(s^\infty, t^\infty)$ is countable. Therefore $\J(s^\infty, ts^\infty)$ is countable.

Similarly, consider any point in $[s^\infty, sts^\infty]$. Apply $\sigma^q$ repeatedly, and we see that all but countably many points must fall into $(sts^\infty, ts^\infty)$. Therefore we have that $\J(sts^\infty, ts^\infty) \setminus \J(s^\infty, ts^\infty)$ must be countable. Hence $\J(sts^\infty, ts^\infty)$ is countable.

The cases with $s$ and $t$ reversed are similar.
\end{proof}
\begin{remark}
It is key to note that item (6) relies very strongly on $\J(s^\infty, t^\infty)$ being countable, which holds only by item (5). This is where the proof fails if the pair $(s,t)$ is not a Farey descendant of a maximal extremal pair.
\end{remark}

The theorem gives the following corollary, as can be seen in Figure~\ref{maxextremalpic}.
\begin{corollary}
 $D_2(\beta) \subset D_1(\beta) \subset D_0(\beta)$.
\end{corollary}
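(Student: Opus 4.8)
The corollary comprises two inclusions. The inclusion $D_1(\beta)\subseteq D_0(\beta)$ is immediate: an uncountable set is in particular non-empty. The substance is $D_2(\beta)\subseteq D_1(\beta)$, which I would prove by contraposition --- assuming $\J(a,b)$ is at most countable, I would show that $B_\beta(a,b)$ is infinite. The first step is a reduction to $I_\beta$: by the ``Large $a$'' lemma, by Lemma~\ref{smallb} together with its corollary, and by the preimage reductions of Lemma~\ref{Xgbhole} and its corollary, any hole lying outside $I_\beta$ is settled directly (it either lies in $D_1(\beta)$, or is a countable case that visibly fails to lie in $D_2(\beta)$), so we may assume $(a,b)\in I_\beta$.

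Inside $I_\beta$ I would invoke the structural dichotomy underlying Theorem~\ref{TransferLemma}: either a positive-entropy subshift --- the set $W_N$ from the proof of Theorem~\ref{TransferLemma}(2) --- survives inside $\J(a,b)$, in which case $\J(a,b)$ is uncountable, contrary to hypothesis; or the hole $(a,b)$ contains the hole $(s^\infty,t^\infty)$ of some maximal extremal pair $(s,t)$, or one of the holes $(s^\infty,t^\infty)$, $(sts^\infty,ts^\infty)$, $(st^\infty,tst^\infty)$ attached to a Farey descendant of a maximal extremal pair. Granting this, in the surviving case $\J(a,b)\subseteq\J(s^\infty,t^\infty)$, which by Lemma~\ref{maxisnice} (which is Theorem~\ref{TransferLemma}(4)) is, up to the relation $\doteq$, the single orbit $\{\sigma^n s^\infty:n\ge 0\}$; in the descendant cases Theorem~\ref{TransferLemma}(5),(6) yields the same conclusion. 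Hence every point of $\J(a,b)$ has the form $w\,s^\infty$ for a finite word $w$, so the only periodic orbit contained in $\J(a,b)$ is the orbit of $s^\infty$, of some fixed minimal period $q=|s|$. Therefore every $n>N_\beta$ with $n\neq q$ is bad for $(a,b)$, so $B_\beta(a,b)$ is infinite and $(a,b)\notin D_2(\beta)$, completing the contrapositive.

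The main obstacle I anticipate is the dichotomy of the second paragraph: showing that if $(a,b)\in I_\beta$ contains no maximal-extremal-pair hole (nor a descendant hole of the listed shapes) then a positive-entropy subshift survives in $\J(a,b)$. The tempting shortcut --- gluing periodic orbits of coprime periods supplied by the finitely-many-bad-$n$ hypothesis --- does not work directly, because a concatenation of blocks from two different admissible periodic orbits need not be admissible, and even when it is it may create a shift falling into the hole; controlling both issues is precisely what the extremal-pair structure (with $\{s,t\}^{\N}\subset\Xb$) secures, through the survival of $W_N$. So the real work is to locate an admissible extremal pair whose hole lies inside $(a,b)$ with room to spare, which is part of the classification of $D_1(\beta)$ carried out later in the paper; once that classification, the easy-region lemmas, and Theorem~\ref{TransferLemma} are in hand, both inclusions follow, as illustrated in Figure~\ref{maxextremalpic}. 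A more direct variant avoids the contrapositive altogether: by Remark~\ref{cornerremark} each $D_i(\beta)$ is closed under shrinking the hole, and by Theorem~\ref{TransferLemma}(2),(3) every maximal hole of $D_2(\beta)$ --- one of the form $(st^\infty+\epsilon,ts^\infty-\epsilon)$ for a coprime admissible extremal pair in $I_\beta$, or a hole from the easy region --- already lies in $D_1(\beta)$, whence $D_2(\beta)\subseteq D_1(\beta)$.
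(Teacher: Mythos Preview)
The paper offers no proof beyond the sentence ``The theorem gives the following corollary, as can be seen in Figure~\ref{maxextremalpic}''; your proposal is a correct elaboration of what that sentence leaves implicit, and your direct variant at the end---comparing corner points via Remark~\ref{cornerremark} together with Theorem~\ref{TransferLemma}(2),(3)---is precisely what the figure encodes. One minor slip in your contrapositive argument: if the only surviving periodic orbit has minimal period $q=|s|$, then $s^\infty$ is also a periodic point of period $kq$ for every $k\ge 1$, so the bad $n$ are those with $q\nmid n$ rather than merely those with $n\neq q$; this does not affect the conclusion that $B_\beta(a,b)$ is infinite. You are also right that the dichotomy you invoke is not genuinely available at the point in the paper where the corollary is stated---it requires the classification of maximal extremal pairs, which is only completed in Section~\ref{rbeta}---so the paper is tacitly forward-referencing, and your identification of this dependency is accurate rather than a defect of your argument.
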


\begin{figure}
\centering

\begin{tikzpicture}
\fill[color=gray] (0,0) -- (5,0) -- (5,5) -- (6,5) -- (6,-1) -- (0,-1) -- cycle;
\fill[color=lightgray] (0,0)--(0,5)--(5,5)--(5,1.5)--(3.5,1.5)--(3.5,0)--cycle;

\draw (-0.5,0) -- (5.5,0) node[right] {$ts^\infty$};
\draw (0,-0.5) node[below] {$s^\infty$} -- (0,5);
\draw (5,-0.5) node[below] {$st^\infty$} -- (5,5);
\draw (0,5) -- (5.5,5) node[right] {$t^\infty$};

\draw[dashed] (3.5,1.5) -- (3.5, -0.1) node[below] {$sts^\infty$};
\draw[dashed] (3.5,1.5) -- (5.1, 1.5) node[right] {$tst^\infty$};

\end{tikzpicture}

\caption{Lemma \ref{TransferLemma} for a maximal extremal pair $(s,t)$. The dark grey shows points in $D_2(\beta)$, the light grey shows points in $D_0(\beta)$, and the white region shows where the Farey descendants of $(s,t)$ will lie, so these points are in $D_0(\beta)$ and may or may not be in $D_1(\beta)$.}
\label{maxextremalpic}
\end{figure}
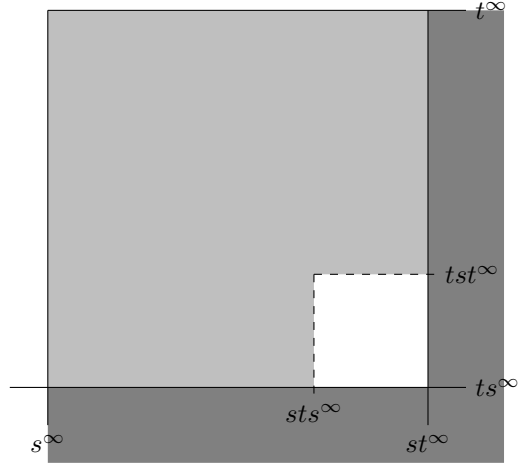

We now wish to establish which extremal pairs are maximal for a given $\beta$. The above results will then combine to delimit the boundaries of $D_0(\beta)$, $D_1(\beta)$, and $D_2(\beta)$, with minor modifications for cases where for example $s^\infty$ is admissible but $st^\infty$ is not. Each set has a continuous boundary consisting of a countable set of plateaus given by $[s^\infty, st^\infty]$ in the case of $D_0(\beta)$ and $D_2(\beta)$ and by $[s^\infty, sts^\infty]$ in the case of $D_1(\beta)$, as shown in Figure~\ref{maxextremalpic}. Recall Remark~\ref{fareydescs}: given a maximal pair $(s,t)$, we have -- up to a set of measure zero given by the limit points -- that
\begin{equation*}
 [s^\infty, st^\infty] = \bigcup_{(s_\rr, t_\rr)} [s_\rr^\infty, s_\rr t_\rr s_\rr^\infty],
\end{equation*}
where $(s_\rr, t_\rr)$ are the Farey descendants of $(s,t)$.

This ensures that once we have the correct maximal pairs, we can completely describe $D_1(\beta)$.

If it is the case that a pair $(s,t)$ is such that $s^\infty$ is admissible but $st^\infty$ is inadmissible, the above results are not significantly disrupted. Any inadmissible sequence must be replaced by the largest admissible sequence that is less than the intended inadmissible sequence. The results showing that a point is \emph{not} in $D_i(\beta)$ for some $i$ will clearly still apply as we have fewer admissible sequences meaning $\J$ will if anything be smaller than previously shown. The difficulty is when we want to show that $\J$ is large, as we must ensure the inadmissibility has not removed too much of $\J$. Fortunately it is easy to see that for the first parts of items (2) and (3) involving points $(s^\infty, ts^\infty-\epsilon)$, the relevant sequences will still be admissible. Then combining with Remark~\ref{cornerremark} we have the needed modification.
\begin{figure}

\centering

  \includegraphics[scale=0.58]{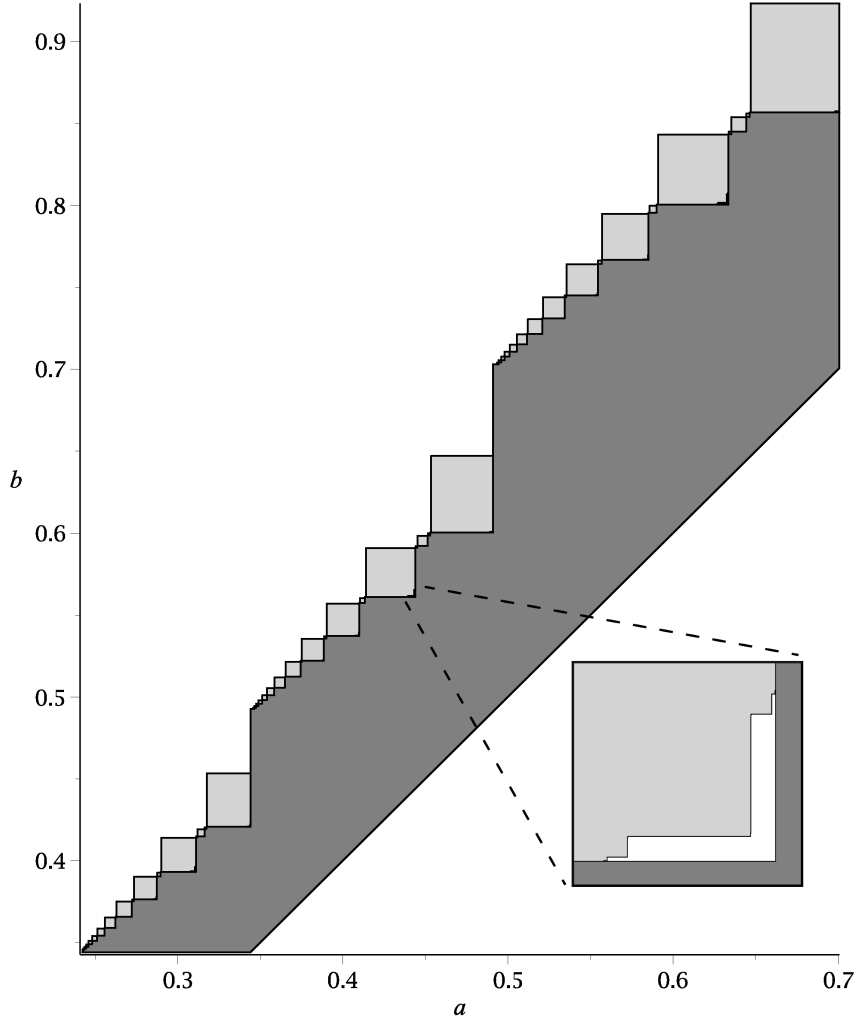}
\caption{$D_2(\beta)$ (dark grey), $D_1(\beta)$ (white + dark grey) and $D_0(\beta)$ (light grey + white + dark grey) for the region $I_\beta \setminus R_\beta$ with $\1 = (10010000)^\infty$, $\beta \approx 1.427$, $\bgb = (1/4, 1/2)$, with magnified area inset to clearly show $D_1(\beta)$.}
\label{Dibalanced}
\end{figure}
\begin{eg} \label{trunceg}
Consider $\1 = (10010000)^\infty$, $\beta \approx 1.427$, which has $\bgb = (1/4, 1/2)$. Then the balanced extremal pair given by $\gb = 1/4$ has $s_{1/4}^\infty = (0100)^\infty$ admissible but $s_{1/4}t_{1/4}^\infty=0100(1000)^\infty$ inadmissible. Then the greatest admissible sequence in $[s_{1/4}^\infty, s_{1/4}t_{1/4}^\infty]$ is $0(10010000)^\infty=10^\infty=1/\beta$.

Theorem~\ref{TransferLemma} tells us that usually the boundary of $D_0(\beta)$ should contain points $[s^\infty, st^\infty] \times t^\infty$, as seen in Figure~\ref{maxextremalpic}. Instead this interval will be truncated: the boundary of $D_0(\beta)$ will contain points $[s_{1/4}^\infty, 1/\beta] \times t_{1/4}^\infty$. Similarly the boundary of $D_2(\beta)$ will contain points $[s_{1/4}^\infty, 1/\beta] \times t_{1/4}s_{1/4}^\infty$.

For $D_1(\beta)$, the same effect will occur with the pair given by $\bgb=(1/4,1/2)$. Again the relevant interval will be truncated.

These effects can be seen in the top right of Figure~\ref{Dibalanced}: the point $(s_{1/4}^\infty, t_{1/4}^\infty)$ is approximately $(0.645, 0.92)$, and $1/\beta \approx 0.7$. As in Figure~\ref{maxextremalpic}, the other balanced pairs visibly give a light grey square showing $D_0(\beta)$, with $D_1(\beta)$ too small to see. The affect of the inadmissibility of $st^\infty$ is that instead there is a light grey rectangle.
\end{eg}

Considered the balanced pairs $(s,t) = (0$-$\max(w_\gamma), 1$-$\min(w_\gamma) )$ discussed in the previous section. As shown by Glendinning and Sidorov, these pairs (when admissible) are maximal extremal. Each pair is admissible when the associated $\gamma$ is less than $\gb$. Notice that in the context of this problem, we consider these particular pairs because they are maximal extremal: that they are balanced is a side effect, not the reason for interest.

For $\beta=2$, the holes formed from these pairs will customarily have two distinct preimages, formed by appending either a $0$ or a $1$ as a prefix to both endpoints of the hole. As $\beta$ decreases, the preimage formed by appending a $1$ becomes inadmissible and so a particular hole may have a unique preimage. Because $\J(a,b)$ is invariant under $\Tb$, this means that all results pertaining to the original hole will also apply to its unique preimage. This leads us to the following conclusion:

\begin{lemma} \label{shiftbalanced}
 Suppose $\gb \in [\frac{1}{n+1}, \frac{1}{n})$ and $0 < k \leq n$. Let $(s,t)=(0$-$\max(w_\gamma), 1$-$\min(w_\gamma)) $ be the maximal extremal balanced pair corresponding to $\gamma < \gb$. Then the pairs $(0^k$-$\max(w_\gamma), 0^{k-1}1$-$\min(w_\gamma))$ are also maximal extremal.
\end{lemma}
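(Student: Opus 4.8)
The plan is to show that each pair $(s',t') = (0^k\text{-}\max(w_\gamma),\, 0^{k-1}1\text{-}\min(w_\gamma))$ is an extremal pair whose orbit is the same orbit $w_\gamma^\infty$ (so $(s')^\infty = s^\infty$ and $(t')^\infty = t^\infty$ as points, just read from a different starting index), and then to deduce maximality directly from the maximality of $(s,t)$. The essential observation is that $s' = 0^k\text{-}\max(w_\gamma)$ and $t' = 0^{k-1}1\text{-}\min(w_\gamma)$ are two neighbouring points of the single orbit $X_\gamma = \{\sigma^n w_\gamma^\infty\}$: since $\gb \in [\tfrac{1}{n+1}, \tfrac{1}{n})$, the balanced word $w_\gamma$ has at most $n$ consecutive $0$s available, so for $k \le n$ the strings $0^{k-1}0$ and $0^{k-1}1$ both genuinely occur as length-$k$ factors of $w_\gamma^\infty$, hence $s'$ and $t'$ are well-defined cyclic permutations of $w_\gamma$. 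The fact that no element of the orbit lies strictly between $(s')^\infty$ and $(t')^\infty$ is exactly the statement that $0^{k-1}0$-max and $0^{k-1}1$-min are adjacent in the ordering of cyclic shifts grouped by length-$k$ prefix — this is the general mechanism noted in the excerpt just after the definition of extremal pairs, namely that for any factor decomposition $u0, u1$ of $w$ (here $u = 0^{k-1}$) the pair $(u0\text{-}\max(w),\, u1\text{-}\min(w))$ is extremal.

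Concretely, the steps are: (1) verify $s'$ and $t'$ are well-defined, i.e. that $w_\gamma^\infty$ contains the factors $0^k$ and $0^{k-1}1$ — immediate from $\gb < 1/n$ forcing runs of at least... rather, from the balancedness of $w_\gamma$ which forces the gaps between consecutive $1$s to take only the two values $\lfloor q/p\rfloor$ and $\lceil q/p \rceil$, both of which are $\ge k$ when $k \le n \le \lfloor q/p \rfloor$... I would state this cleanly as a consequence of $p/q = \gamma \le \gb < 1/n$. (2) Check $(s')^\infty \prec (t')^\infty$: since $s'$ begins $0^k$ and $t'$ begins $0^{k-1}1$ they differ at coordinate $k$, with $s'$ having a $0$ and $t'$ a $1$, so $(s')^\infty \prec (t')^\infty$. (3) Invoke the extremality criterion: for every shift $\sigma^m (s')^\infty$, either it begins with some prefix $\prec 0^{k-1}1$ in which case $\sigma^m(s')^\infty \preceq (s')^\infty$ by maximality of $s'$ among orbit elements starting $0^k$ (and the prefix before position $k$ already decides it), or it begins with $0^{k-1}1$ or larger, in which case $\sigma^m(s')^\infty \succeq (t')^\infty$ by minimality of $t'$. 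This is the standard argument; the only care needed is the boundary case where a shift begins exactly $0^{k-1}1$, handled by $t' = 0^{k-1}1\text{-}\min$. (4) Deduce maximality: suppose a point $x$ has orbit contained in $[0, (s')^\infty)$ or in $((t')^\infty, 1)$. Since $(s')^\infty = s^\infty$ and $(t')^\infty = t^\infty$ as real numbers (same orbit, hence same set of orbit closure values — more precisely the infimum of the orbit is $s^\infty$ read appropriately), such an $x$ would contradict maximality of $(s,t)$; similarly any extremal pair $(\tilde s,\tilde t)$ with $((s')^\infty,(t')^\infty) \subset (\tilde s^\infty, \tilde t^\infty)$ gives the same containment $(s^\infty, t^\infty) \subset (\tilde s^\infty, \tilde t^\infty)$, contradicting maximality of $(s,t)$ again.

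The main obstacle I anticipate is step (4), specifically making precise the claim that passing from $(s,t)$ to $(s',t')$ does not change the \emph{interval} $(s^\infty, t^\infty)$ when these are interpreted as subsets of $[0,1)$ via the expansion map. The subtlety is that $s'$ and $t'$ are cyclic permutations of $w_\gamma$, so $(s')^\infty$ and $s^\infty$ are generically \emph{different} points — what is true is that $s^\infty = \min X_\gamma$'s expansion is $w_\gamma^\infty$ and $t^\infty$ is $t_\gamma^\infty$, whereas $(s')^\infty$ and $(t')^\infty$ are a different adjacent pair in $X_\gamma$. So the correct statement is not that the holes coincide but that $(s',t')$ and $(s,t)$ are \emph{both} maximal extremal pairs arising from the same orbit $X_\gamma$, and maximality must be argued intrinsically for $(s',t')$: no periodic orbit other than $X_\gamma$ avoids $((s')^\infty, (t')^\infty)$ because such an orbit would be squeezed into $[0,(s')^\infty)$ or $((t')^\infty,1)$ and one shows, using that $X_\gamma$ is the unique $\gamma$-balanced orbit and the devil's-staircase separation from Lemma~\ref{gbdescription} and Remark~\ref{fareydescs}, that no admissible orbit fits there for $\gamma \le \gb$; and no extremal pair strictly contains it because any such pair would, after shifting, strictly contain $(s^\infty,t^\infty)$ too. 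I would spell out this "shifting" correspondence between extremal pairs of the same orbit as the technical heart of the argument, likely as a short preliminary observation that for a fixed periodic orbit the property "there is no admissible orbit trapped on one side, and no strictly larger extremal pair" is invariant under which adjacent pair of the orbit one selects.
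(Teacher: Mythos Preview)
Your steps (1)--(3) are fine and establish that $(s',t')$ is an extremal pair arising from the same orbit $X_\gamma$. The gap is exactly where you locate it, in step~(4). Your first attempt, asserting $(s')^\infty=s^\infty$ as points, is simply false (e.g.\ for $\gamma=1/3$ one has $s^\infty=(010)^\infty$ but $(00\text{-}\max)^\infty=(001)^\infty$). Your corrected plan --- a ``shifting correspondence'' showing that a strictly larger extremal pair around $((s')^\infty,(t')^\infty)$ would shift to one around $(s^\infty,t^\infty)$ --- is not justified as stated: the shift $\sigma$ is not globally order-preserving, so a containment of intervals need not survive applying $\sigma$, particularly since a hypothetical $\tilde t^\infty$ could begin with $1$ while $(t')^\infty$ begins with $0$.

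The paper sidesteps this entirely by a dynamical rather than combinatorial observation: the hole $((s')^\infty,(t')^\infty)$ is the \emph{unique} $T_\beta$-preimage of the hole for the pair at level $k-1$. Indeed, the two preimages of any hole $(a,b)$ are $\{0x:x\in(a,b)\}$ and $\{1x:x\in(a,b)\}$; when $\gb\in[1/(n+1),1/n)$ the expansion $\1$ begins $10^{n-1}$, so for $k\le n$ the $1$-branch preimage of the level-$(k-1)$ hole is inadmissible and only the $0$-branch survives. Since $\J$ is $T_\beta$-invariant, a hole and its unique preimage have essentially the same avoidance set; iterating down to $k=1$ and invoking Lemma~\ref{maxisnice} gives $\J((s')^\infty,(t')^\infty)\doteq X_\gamma$, which is precisely maximality of $(s',t')$. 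This is the content of the paragraph immediately preceding the lemma. Your ``shifting correspondence'' is the right instinct, but formulating it as \emph{preimage of the hole} rather than \emph{correspondence of pairs} is what makes the argument go through without having to control how $\sigma$ acts on hypothetical larger extremal pairs.
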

Notice that when $\gb \in [\frac{1}{n+1}, \frac{1}{n})$, we have that $\1$ begins with $10^{n-1}$. This gives the correct range of $0 < k \leq n$ to ensure a unique preimage. Also note that $\infXgb$ begins with $0^n$, so as one would expect these pairs will fall into the region $I_\beta=(0 \infXgb, 1/\beta) \times (\infXgb, 1 \infXgb)$.

This means that by considering all suitable $\gamma$ and $k$, balanced pairs will cover the range $(0^n1, 0\1) \times (0^{n-1}1, 1 \infXgb)$. However these are all available balanced pairs, and so we cannot expect the remaining region $R_\beta =(0 \infXgb, 0^n1) \times (\infXgb, 0^{n-1}1)$ to involve balanced pairs.

Figure \ref{Dibalanced} shows the balanced pairs giving $D_i(\beta)$ for $I_\beta \setminus R_\beta$, with $\beta \approx 1.427$. Note that $D_1(\beta)$ is shown by the dark grey and the white areas ``between'' the light and dark grey. These white areas do exist but are so small as to be barely visible, therefore the inset image shows a magnification as indicated. Notice how the overall image has the same section repeated three times at different scales. This corresponds to the shifting of the balanced words as in Lemma~\ref{shiftbalanced} above. Furthermore there are vertical intervals that appear to be jumps, at $a = 1/\beta^k$, such that $\partial D_2(\beta) = \partial D_1(\beta) = \partial D_0(\beta)$. This corresponds to where $s^\infty$ is admissible but $st^\infty$ is inadmissible, as discussed in Example~\ref{trunceg}.

\section{The region $R_\beta=(0 \infXgb, 0^n1) \times (\infXgb, 0^{n-1}1)$} \label{rbeta}
The previous sections have described $D_0(\beta)$, $D_1(\beta)$ and $D_2(\beta)$ for $a \leq 0 \infXgb$ and for $a \geq 0^n 1$. In countably many cases the remaining region $R_\beta$ is empty. This occurs precisely when $\1 = w_\gamma^\infty$ for
\begin{equation*}
 \gamma = \frac{k}{(n+1)k-1} \in (1/(n+1), 1/n],
\end{equation*}
with $k$, $n \in \N$. For these values of $\beta$, the description of the $D_i(\beta)$ is already complete and needs only balanced pairs. The doubling map is one of these exceptional cases ($n=k=1$), as is the golden ratio $\beta = (1+\sqrt{5})/2$ ($n=2, k=1$).

For the remaining $\beta$ we hence need to find the maximal extremal pairs that fall into the region $R_\beta$.

As it happens, the required extremal pairs for $R_\beta$ are defined using balanced words as per the following algorithm. For some rational $\gamma \in (1/(n+1), 1/n)$, denote by $u_\gamma$ the \emph{minimal} shift of the balanced word formed from $\gamma$. Then create a Farey-like tree of words beginning with $0$ and $u_\gamma$ as the roots of this tree. Just as in a standard Farey tree, neighbours $u_1$ and $u_2$ such that $u_1^\infty < u_2^\infty$ are combined to form $u_2u_1$.

As $\gamma \in (1/(n+1), 1/n)$, the resultant words $w$ (excepting roots $0$ and $u_\gamma$) must contain $0^{n+1}$ as a factor. Therefore, define $s=0^{n+1}$-$\max (w)$ and $t=0^n1$-$\min (w)$. These by definition form an extremal pair. We claim that for the right combinations of $\gamma$ and $\beta$, these are the maximal pairs. This will be illustrated by Example \ref{rexample} at the end of the section and is shown in Figure~\ref{rpic}.
\begin{figure}[t]

\centering

  \includegraphics[scale=0.58]{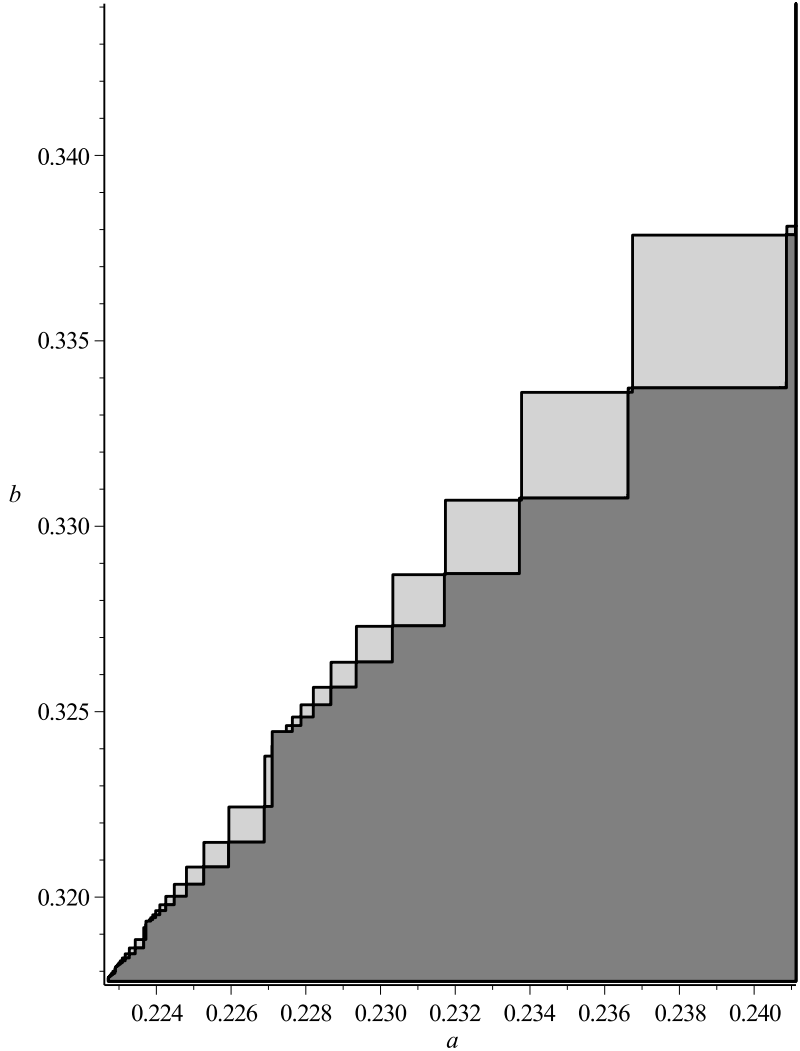}
\caption{$D_2(\beta)$ (dark grey), $D_1(\beta)$ (white + dark grey) and $D_0(\beta)$ (light grey + white + dark grey) for the region $R_\beta$ with $\1 = (10010000)^\infty$, $\beta \approx 1.427$, $\bgb = (1/4, 1/2)$.}
\label{rpic}
\end{figure}

We begin by showing that if the pairs defined above fall into $R_\beta$, then they must be maximal. We do this by induction, exploiting the tree structure of the definition.
\begin{lemma} \label{treeworks}
 For any minimal cyclically balanced $u_\gamma$ associated to $\gamma=p/q$, consider the pair $(s,t) = (0u_\gamma^k, u_\gamma 0u_\gamma^{k-1})$, $k \geq 1$. If $\beta$ satisfies $(s^\infty, t^\infty) \in R_\beta$, then $\J[s^\infty, t^\infty]=\emptyset$: that is to say, $(s,t)$ is a maximal extremal pair.
\end{lemma}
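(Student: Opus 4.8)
The plan is to show directly that no admissible point has its entire forward orbit disjoint from the closed hole $[s^\infty,t^\infty]$ --- this is the stated assertion, and it is equivalent to $(s,t)$ being a maximal extremal pair. Fix $\beta$ with $(s^\infty,t^\infty)\in R_\beta$. I would begin by recording the combinatorial shape of the hole. Since $\gamma\in(1/(n+1),1/n)$ the minimal cyclically balanced word is $u_\gamma=0^n1u_\gamma'$, so $s=0u_\gamma^k$ begins $0^{n+1}1$, $t=u_\gamma0u_\gamma^{k-1}$ begins $0^n1$, and $0^{n+1}$ is the unique zero-run of that length in the cyclic word $0u_\gamma^k$, which makes $s^\infty$ the least point of its own orbit. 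Hence a sequence $y$ lies in $[s^\infty,t^\infty]$ exactly when either $y$ begins $0^{n+1}$ with $y\succeq s^\infty$, or $y$ begins $0^n1$ with $y\preceq t^\infty$.

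Two ``one-sided'' possibilities are disposed of at once. A point whose forward orbit stays in $[0,s^\infty)$ must have every iterate's expansion begin with $0^{n+1}$ (as $s^\infty>0\infXgb>0$ does), which forces the expansion to be $0^\infty$, the trivial fixed point, excluded from consideration. A point whose orbit stays in $(t^\infty,1)$ would avoid $[0,\infXgb]$, since $(s^\infty,t^\infty)\in R_\beta$ gives $t^\infty\succ\infXgb$; but this is impossible because $\J[0,\infXgb]=\emptyset$, an immediate consequence of Lemma~\ref{Xgbhole} (whose proof gives $\J(0,\infXgb)\doteq X_{\gb}$, so that every orbit meets $[0,\infXgb]$). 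What remains is a ``straddling'' point: an admissible $x$ with shifts both below $s^\infty$ and above $t^\infty$ but none in $[s^\infty,t^\infty]$. Ruling this out is the heart of the matter, since the orbit closure of such an $x$ would produce an extremal pair strictly enclosing $(s^\infty,t^\infty)$.

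Suppose such an $x$ exists. Reading the avoidance condition through the dichotomy above gives local rules on $x$: after any occurrence of the factor $0^{n+1}1$ the ensuing tail of $x$ is strictly smaller than the tail of $s^\infty$ after $0^{n+1}1$, and after any occurrence of $0^n1$ the ensuing tail is strictly larger than the tail of $t^\infty$ after $0^n1$. Next, $(s^\infty,t^\infty)\in R_\beta$ forces $\gb\in[1/(n+1),1/n)$, so by Lemma~\ref{shiftbalanced} the sequence $\1$ begins $10^{n-1}$, and hence no admissible sequence has all its zero-runs shorter than $n$ (otherwise it has a shift exceeding $\1$, or $(10^{n-1})^\infty=w_{1/n}^\infty$ among its shifts, which is inadmissible since $1/n>\gb$). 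Combining the local rules with the fact that $x$ must then have a zero-run of length exactly $n$, one sees that $x$ is forced, between consecutive $1$s, to run through the blocks of $u_\gamma$ together with occasional single $0$s; that is, $x$ lies in the subshift on the alphabet $\{0,u_\gamma\}$ out of which the $R_\beta$-tree is built. The contradiction then comes by induction on the position of $(s,t)$ in that tree --- equivalently an induction on $k$, with base case $k=1$, where the two Farey parents are the roots $0$ and $u_\gamma$ and the hole is explicit enough to inspect directly (and where one may, if needed, induct in addition on the Farey depth of $\gamma$ via $w_\gamma=w_{\gamma_2}w_{\gamma_1}$). The inductive hypothesis is that the closed holes of the Farey parents of $(s,t)$ have empty avoidance set; re-coding the structured $x$ over the parent alphabet --- the same relabelling device used in the proof of Theorem~\ref{TransferLemma}(5), with Remark~\ref{fareydescs} controlling how $[s^\infty,t^\infty]$ sits inside the parent interval --- turns $x$ into an admissible sequence avoiding a parent hole, which is forbidden. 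The points of $x$ not reached by this correspondence lie in the exposed sliver between $s_{k-1}^\infty$ and $s_k^\infty$ (with $s_j:=0u_\gamma^j$), and on that sliver repeated application of $\sigma^{|s|}$, using that these finite orbits are ordered (\cite{bullettsentenac94}), drives $x$ into $[s^\infty,t^\infty]$.

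The step I expect to be the main obstacle is this re-coding: making the correspondence between $[s^\infty,t^\infty]$ and a parent hole tight up to a genuinely negligible remainder, so that ``$x$ avoids our hole'' transfers faithfully to ``the re-coded sequence avoids the parent hole'', and checking that membership in $R_\beta$ is inherited up the tree in precisely the direction the induction requires. A secondary nuisance is the admissibility bookkeeping in the case where $s^\infty$ is admissible but its continuation $st^\infty$ is not, as in Example~\ref{trunceg}: by Remark~\ref{cornerremark} this can only shrink $\J$, but it alters which explicit words realise the relevant bounds and must be carried along throughout the induction.
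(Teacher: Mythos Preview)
Your proposal takes a substantially different and much heavier route than the paper, and the central step contains a real gap. The paper's argument is a short interval-chasing that bypasses all structural analysis of $x$. Starting (as you do) from $\J[0\infXgb,\infXgb]=\emptyset$ via Lemma~\ref{Xgbhole}, one restricts any surviving $x$ to $[0\infXgb,s^\infty]$; applying $\sigma$ once moves this to $[\infXgb,(u_\gamma^k0)^\infty]$, which again overlaps the hole, so one restricts to $[t^\infty,\sigma s^\infty]=[(u_\gamma0u_\gamma^{k-1})^\infty,(u_\gamma^k0)^\infty]$; now repeated application of $\sigma^q$ with $q=|u_\gamma|$ shrinks the surviving interval until only the boundary point $t^\infty$ remains. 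There is no induction on $k$, no re-coding, and no need to decompose $x$ into blocks.

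The gap in your approach is the assertion that a straddling $x$ ``is forced\ldots to lie in the subshift on the alphabet $\{0,u_\gamma\}$''. The local rules you extract (on tails following $0^{n+1}1$ versus $0^n1$) constrain $x$, but they do not by themselves force every $1$ to initiate a full copy of $u_\gamma$: other admissible blocks sharing the same short prefix are not excluded by what you wrote, and ``one sees that'' hides exactly the combinatorics that would need to be supplied. Everything downstream (the re-coding, the induction, the sliver argument) rests on this unproved claim, and you rightly identify the re-coding as the main obstacle. Note also that the inductive framework you are reaching for is precisely what the paper reserves for the \emph{next} lemma (Lemma~\ref{treekidswork}), where both Farey parents are genuine maximal extremal pairs already in hand; here, on the right spine of the tree, one ``parent'' is the bare root $u_\gamma$, which is not a pair at all, so your base case would have to be handled separately anyway---and the paper's interval-chasing handles every $k$ uniformly without any such split.
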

\begin{proof}
 Consider a point $x \in (0,1)$. We know by Lemma~\ref{Xgbhole} that
\begin{equation*}
\J[0 \infXgb, \infXgb] = \emptyset,
 \end{equation*}
 so we may restrict to $x \in [0 \infXgb, \infXgb]$. This region overlaps the hole under consideration, so restrict $x$ to $[0 \infXgb, s^\infty] = [0 \infXgb, (0u_\gamma^k)^\infty]$. By applying the shift map we may restrict to $[\infXgb, (u_\gamma^k0)^\infty]$. Again this overlaps the hole, so restrict to $[t^\infty, \sigma s^\infty] = [(u_\gamma0u_\gamma^{k-1})^\infty, (u_\gamma^k0)^\infty]$. Then apply $\sigma^q$ repeatedly and restricting each time, we may conclude that any point still remaining must itself be equal to $t^\infty$.
\end{proof}

\begin{remark}
 Following the above proof it is also easy to see that pairs $(s,t) = (0u_\gamma0^{k-1}, u_\gamma0^k)$ must also be maximal extremal: notice that in this case it is simpler as $t^\infty = \sigma s^\infty$.
\end{remark}

\begin{lemma} \label{treekidswork}
 Suppose two maximal extremal pairs $(s_1, t_1)$ and $(s_2, t_2)$ are Farey neighbours in a tree generated by $0$ and $u_\gamma$ for a minimal cyclically balanced word $u_\gamma$. Then the pair $(s,t) = (s_2s_1, t_1t_2)$ is also maximal.
\end{lemma}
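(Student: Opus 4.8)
The plan is to prove the stronger assertion $\J[s^\infty,t^\infty]=\emptyset$ (closed hole), from which maximality follows exactly as in the proof of Lemma~\ref{treeworks}: any point of $\J(s^\infty,t^\infty)$ then has a forward orbit meeting $[s^\infty,t^\infty]$ but not $(s^\infty,t^\infty)$, hence meeting $\{s^\infty,t^\infty\}$; since $t^\infty$ is a cyclic shift of $s^\infty$, such a point is a preimage of $s^\infty$, so $\J(s^\infty,t^\infty)\doteq\{\sigma^n s^\infty:n\in\N\}$ and Lemma~\ref{maxisnice} applies, while $\J[s^\infty,t^\infty]=\emptyset$ itself forbids an orbit lying entirely in $[0,s^\infty)$ or $(t^\infty,1)$. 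The statement is the inductive step in the claim that every node of the tree generated by $0$ and $u_\gamma$ yields a maximal extremal pair, the base cases --- the nodes having a root of the tree as a Farey parent --- being precisely the pairs $(0u_\gamma^k,u_\gamma 0 u_\gamma^{k-1})$ and $(0u_\gamma 0^{k-1},u_\gamma 0^k)$ settled by Lemma~\ref{treeworks} and its following remark; so I may use the inductive hypothesis $\J[s_1^\infty,t_1^\infty]=\J[s_2^\infty,t_2^\infty]=\emptyset$.

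The first ingredient is the ordering of the six words involved. Since $(s_1,t_1)$ and $(s_2,t_2)$ are Farey neighbours with child $(s,t)=(s_2 s_1,t_1 t_2)$, the non-overlapping of the intervals $[s_{\rr}^\infty,s_{\rr} t_{\rr} s_{\rr}^\infty]$ from Remark~\ref{fareydescs} --- a feature of any Farey-type tree whose left root is lexicographically below its right root, hence of the tree generated by $0$ and $u_\gamma$ --- together with the inequalities $s_1 t_1^\infty\prec s^\infty$ and $s t^\infty\prec s_2^\infty$ of the remark after Lemma~\ref{gbdescription} (and the analogous inequalities for the $t$-words, by the symmetry of the construction), gives
\[ s_1^\infty\prec s^\infty\prec s_2^\infty\prec t_1^\infty\prec t^\infty\prec t_2^\infty. \]
Standard Stern--Brocot combinatorics (the words attached to the nodes along any branch are prefix-nested) moreover shows that $s^\infty$ begins with the block $s_1$ and that $t^\infty$ begins with the block $t_2$. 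If $s^\infty$ is admissible but some larger word is not, one replaces that word by the largest admissible sequence below it, as in the discussion after Theorem~\ref{TransferLemma}; this only shrinks $\J$, and so cannot disturb the conclusion.

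The second ingredient is a trapping argument modelled line by line on the proof of Lemma~\ref{treeworks}, with the block $u_\gamma$ there replaced by $s_1$ on one side and by $t_2$ on the other, and with $\J[0\infXgb,\infXgb]=\emptyset$ replaced by the inductive hypotheses. Let $x\in(0,1)$ have forward orbit disjoint from $[s^\infty,t^\infty]$. Since $\J[s_1^\infty,t_1^\infty]=\emptyset$ and $[s^\infty,t_1^\infty]\subset[s^\infty,t^\infty]$, some forward shift of $x$ lies in $[s_1^\infty,t_1^\infty]\setminus[s^\infty,t^\infty]=[s_1^\infty,s^\infty)$; replace $x$ by it. Every point of $[s_1^\infty,s^\infty)$ begins with $s_1$, so $\sigma^{|s_1|}$ is order-preserving there; applying $\sigma^{|s_1|}$ and re-confining the image to the part missing the hole --- using the chain above, the extremality of $(s,t)$, and, on the upper side, the inductive hypothesis $\J[s_2^\infty,t_2^\infty]=\emptyset$ with the block $t_2$ --- strips one copy of $s_1$ off and lands $x$ in a strictly smaller interval of shape $[s_1^\infty,\ast)$ or $(\ast,t_2^\infty]$. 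The relevant endpoints are shifts of the single periodic point $s^\infty$, so they take only finitely many values, and each re-confinement moves them strictly towards $s^\infty$ (resp.\ $t^\infty$); hence after finitely many steps the trap closes to $\{s^\infty\}$ or $\{t^\infty\}$, whose orbit meets $[s^\infty,t^\infty]$, a contradiction. Therefore $\J[s^\infty,t^\infty]=\emptyset$.

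The main difficulty is this trapping step. In Lemma~\ref{treeworks} the words are literal powers of $u_\gamma$, so the nested intervals have explicit endpoints and termination is visible; here $(s_1,t_1)$ and $(s_2,t_2)$ are arbitrary interior nodes, and the intermediate endpoints have no closed form. Making the argument precise requires: (i) the prefix-nesting of the tree words, so that at each stage the correct power of $\sigma$ is monotone on the current cylinder and carries an interval $[s_1^\infty,a)$ with $s^\infty\preceq a$ to an interval $[s_1^\infty,a')$ with $s^\infty\preceq a'\prec a$; (ii) verifying at each step that the re-confined interval has one of the two claimed shapes --- which, exactly as in Lemma~\ref{treeworks}, is where the extremality relations $\sigma^k s_i^\infty\preceq s_i^\infty$ or $\sigma^k s_i^\infty\succeq t_i^\infty$ of the two parent pairs are used; and (iii) observing that endpoints which are shifts of a fixed periodic point can decrease only finitely often, so termination holds without an explicit formula. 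No genuinely new idea beyond Lemma~\ref{treeworks} is needed --- it is the same computation carried out with $s_1$ and $t_2$ in place of $u_\gamma$ --- but the bookkeeping is the substantive part.
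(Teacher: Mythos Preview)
Your overall strategy --- prove $\J[s^\infty,t^\infty]=\emptyset$ by using the maximality of the parent pairs to trap the orbit, then conclude via Lemma~\ref{maxisnice} --- is exactly the paper's. The paper's version is shorter and uses only the left parent $(s_1,t_1)$: from $s_1^\infty\prec s^\infty$ and $t_1^\infty\prec t^\infty$ (both consequences of the tree construction) it follows that any orbit avoiding $[s^\infty,t^\infty]$ must, whenever it enters $[s_1^\infty,t_1^\infty]$, land in $[s_1^\infty,s^\infty)$; iterating the hypothesis $\J[s_1^\infty,t_1^\infty]=\emptyset$ forces the orbit onto $s_1^\infty$, which then shifts to $t_1^\infty\in[s^\infty,t^\infty]$. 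The second parent $(s_2,t_2)$ is not invoked.

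There is, however, a concrete error in your execution. You assert that ``$s^\infty$ begins with the block $s_1$'' and that ``every point of $[s_1^\infty,s^\infty)$ begins with $s_1$, so $\sigma^{|s_1|}$ is order-preserving there''. But $s=s_2s_1$, so $s^\infty$ begins with $s_2$, not $s_1$; and $[s_1^\infty,s^\infty)$ is not a cylinder on the prefix $s_1$. Already for the balanced parents $s_1=s_{1/3}=001$ and $s_2=s_{1/2}=01$ one has $s^\infty=(01001)^\infty$ beginning $010$, and the point $0100(001)^\infty$ lies in $[(001)^\infty,(01001)^\infty)$ yet does not begin with $001$. Thus $\sigma^{|s_1|}$ is not monotone on $[s_1^\infty,s^\infty)$, and the contraction step ``carries $[s_1^\infty,a)$ to $[s_1^\infty,a')$ with $a'\prec a$'' on which your termination argument rests does not hold as stated. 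The paper's proof (following \cite{sidorov14}) does not attempt any such monotonicity claim; it simply re-applies $\J[s_1^\infty,t_1^\infty]=\emptyset$ to produce repeated returns to $[s_1^\infty,s^\infty)$ and argues that only $s_1^\infty$ survives this constraint. If you want to salvage a prefix-based contraction, the block to strip is $s_2$ (the actual prefix of $s$), not $s_1$.
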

\begin{proof}
This has been shown for the tree of balanced words in \cite[Lemma 2.5]{sidorov14} and again is more a property of the tree construction than of the specific words. We repeat the proof here for completeness' sake. To show maximality of $(s,t)$, we aim to show that $\J[s^\infty, t^\infty] = \emptyset$. Consider $x \in (0,1)$. We know by maximal extremality that $\J[s_1^\infty, t_1^\infty] = \emptyset$, so we may restrict to $x \in [s_1^\infty, t_1^\infty]$. We know by the tree construction that $s_1^\infty < s^\infty$ and $t_1^\infty < t^\infty$, so we may restrict to $x \in [s_1^\infty, s^\infty)$. Then $\sigma^{|s_1|}(x) \in [s_1^\infty, t_1^\infty]$, so restrict again. Continuing this process, we see that the only possible point avoiding $[s^\infty, t^\infty]$ must be $s_1^\infty$. But then this shifts to $t_1^\infty \in [s^\infty, t^\infty]$.
\end{proof}

The above two lemmas combine to imply that if $(s,t)$ is a suitable admissible extremal pair as described, then $(s,t)$ is maximal extremal.

We now discuss which $\gamma$ are associated with which $\beta$. We describe this in two ways: firstly, by giving the set of correct $\beta$ for a particular $\gamma$ and secondly by giving the correct $\gamma$ in terms of a particular $\beta$.

\begin{lemma}
 Let $u_\gamma$ denote the minimal cyclic shift of the balanced word associated to $\gamma \in \Q$ with $\gamma \neq 1/n$ and left Farey parent $\gamma_1$. Then we have that the admissible pairs $(s,t)$ from the Farey tree formed by $0$ and $u_\gamma$ are maximal extremal pairs for $\beta$ if and only if $\gb \in [\gamma_1, \gamma)$.
\end{lemma}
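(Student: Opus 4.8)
The plan is to prove the two implications separately, using Lemmas~\ref{treeworks} and~\ref{treekidswork} to get maximality and Parry's admissibility criterion (together with Lemma~\ref{gbdescription}) to control which pairs are admissible for which $\beta$.

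For the ``if'' direction, suppose $\gb\in[\gamma_1,\gamma)$. First I would organise the $\{0,u_\gamma\}$-tree: every node is obtained by repeatedly taking Farey children of the two spine families $u_\gamma^k0$ (right spine) and $u_\gamma0^k$ (left spine), whose pairs are exactly those in Lemma~\ref{treeworks} and its Remark. That lemma already gives maximality for a spine pair \emph{provided} $(s^\infty,t^\infty)\in R_\beta$, and Lemma~\ref{treekidswork} propagates maximality to every interior node by induction on tree depth; so the ``if'' direction reduces to (a) showing each admissible tree pair lies in $R_\beta=(0\infXgb,0^n1)\times(\infXgb,0^{n-1}1)$, and (b) showing enough tree pairs are admissible for this to have content. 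For (a), the two ``upper'' inequalities $s^\infty\prec0^n1$ and $t^\infty\prec0^{n-1}1$ are automatic, since every tree word contains the factor $0^{n+1}$, so $s$ begins $0^{n+1}$ and $t$ begins $u_\gamma=0^n1\cdots$. The two ``lower'' inequalities $s^\infty\succ0\infXgb$ and $t^\infty\succ\infXgb$ are where the hypothesis enters: writing $\infXgb=u_{\gb}^\infty$ and using the relation $u_\gamma=u_{\gamma_1}u_{\gamma_2}$ for minimal cyclic shifts (dual to $w_\gamma=w_{\gamma_2}w_{\gamma_1}$), the inequality $t^\infty\succ u_{\gb}^\infty$ along the right spine reduces in the limit to $u_\gamma^\infty\succ u_{\gb}^\infty$, i.e.\ $\gb<\gamma$, while along the left spine it reduces to a lexicographic comparison of $u_\gamma$ against $u_{\gb}$ forced by $\gb\ge\gamma_1$; since $u_\gamma^\infty$ is increasing in $\gamma$, these hold exactly on $[\gamma_1,\gamma)$. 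For (b), a node $w$ gives an admissible pair precisely when the largest cyclic rotation of $w$, read periodically, does not exceed $\1$; since the tree words interpolating between $0$ and $u_\gamma$ have maximal rotations dominated by $w_\gamma^\infty$, and $\gb\ge\gamma_1$ keeps the sufficiently zero-heavy nodes admissible, the admissible pairs do exhaust a full subtree on which Lemmas~\ref{treeworks} and~\ref{treekidswork} apply.

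For the ``only if'' direction I would argue the contrapositive, ruling out $\gb\ge\gamma$ and $\gb<\gamma_1$. If $\gb\ge\gamma$ then $w_\gamma^\infty\preceq\1$, so $\infXgb=u_{\gb}^\infty\succeq u_\gamma^\infty$; take the first tree node $w=u_\gamma0$, with pair $(0u_\gamma,u_\gamma0)$ (the $k=1$ case of Lemma~\ref{treeworks}). Its maximal rotation is dominated by $w_\gamma^\infty\preceq\1$, so it is admissible, but $(u_\gamma0)^\infty\prec u_\gamma^\infty\preceq\infXgb$, so the hole lies entirely below $\infXgb$; by Lemma~\ref{Xgbhole} (via Remark~\ref{smallbd0rmk}), $X_{\gb}\subseteq\J(0,\infXgb)$ is a periodic orbit disjoint from the hole and contained in $((u_\gamma0)^\infty,1)$, which violates the first clause of maximality. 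If $\gb<\gamma_1$ then $\gamma_1\neq\tfrac1n$, so a ``one level coarser'' tree $\{0,u_{\gamma'}\}$ with $\gamma'\le\gamma_1$ governs $R_\beta$, and one shows that the $\{0,u_\gamma\}$-tree pairs which would be admissible now overshoot $R_\beta$ on the small-coordinate side, landing where they are dominated by a balanced pair or by a coarser tree pair, so the statement fails for want of a maximal witness. The boundary value $\gb=\gamma_1$ is the delicate edge case and is handled, as elsewhere in the paper, by replacing any inadmissible endpoint sequence by the largest admissible sequence below it, exactly as in Example~\ref{trunceg}.

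The main obstacle is the interface between admissibility and maximality in the $\gb<\gamma_1$ half, together with the two ``lower'' boundary inequalities in the ``if'' half: one must pin down exactly that the leftmost extent of the tree's holes --- the limits $0u_\gamma0^\infty$ and $u_\gamma0^\infty$ along the left spine and $0u_\gamma^\infty$, $u_\gamma0u_\gamma^\infty$ along the right spine --- straddles the corner $(0\infXgb,\infXgb)$ of $R_\beta$ precisely when $\gb=\gamma_1$, crossing it for $\gb<\gamma_1$ and clearing it for $\gb\in[\gamma_1,\gamma)$. This is a careful lexicographic comparison of $u_\gamma=u_{\gamma_1}u_{\gamma_2}$ with $u_{\gb}$ as $\gb$ ranges over $[\gamma_1,\gamma)$, and it is here that the choice of $\gamma_1$ as the \emph{left Farey parent} of $\gamma$ --- rather than an arbitrary rational below $\gamma$ --- is forced; by contrast I expect the maximality propagation via Lemmas~\ref{treeworks} and~\ref{treekidswork} and the $\gb\ge\gamma$ case to be comparatively routine.
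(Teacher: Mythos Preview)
Your ``if'' direction is the paper's approach: verify that admissible tree pairs lie in $R_\beta$ and then invoke Lemmas~\ref{treeworks} and~\ref{treekidswork}. Your handling of $\gb\ge\gamma$ in the ``only if'' direction is also essentially what the paper means by ``the sequences fall below $R_\beta$'': both reduce to $t^\infty\prec u_\gamma^\infty\preceq u_{\gb}^\infty=\infXgb$, so that $X_{\gb}$ is an orbit lying entirely in $(t^\infty,1)$, violating maximality. You state this only for $(0u_\gamma,u_\gamma0)$, but the same inequality holds for every tree pair since each $t$ contains at least one surplus $0$.

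Where you go astray is the case $\gb<\gamma_1$. You propose to locate admissible tree pairs and then dominate them by balanced pairs or by pairs from a coarser $\{0,u_{\gamma'}\}$-tree. The paper's argument is far simpler: when $\1\prec w_{\gamma_1}^\infty$ there are \emph{no} admissible tree words at all, so the set of admissible pairs is empty and there is nothing further to prove. The reason is that every tree word (other than the root $0$) contains a full block $u_\gamma$, and its maximal cyclic rotation therefore begins with the suffix of $u_\gamma$ read from the position where $w_\gamma$ starts; this suffix ends in the terminal $1$ of $u_\gamma$ and agrees with $w_\gamma$ long enough that it already exceeds $w_{\gamma_1}^\infty$ at the first place where $w_\gamma$ and $w_{\gamma_1}^\infty$ differ. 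Hence the maximal rotation exceeds $\1$. Your premise that some tree pairs survive admissibility in this regime is incorrect, and the domination machinery you build on it is unnecessary---this is precisely why the left endpoint is $\gamma_1$ and not some arbitrary rational below $\gamma$.
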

\begin{proof}

  Firstly, notice that whenever $\gb \in [\gamma_1, \gamma)$, at least part of the Farey tree generated by $0$ and $u_\gamma$ will be admissible and give pairs $(s,t)$ satisfying $(s^\infty, t^\infty) \in R_\beta$. Outside of these values of $\gb$ we have that either the entirety of the tree will be inadmissible or the sequences will fall below $R_\beta$.

As $\1$ increases towards $w_\gamma^\infty$, more and more of the tree from $0$ and $u_\gamma$ becomes admissible. Therefore for every $k$, there exists $\beta$ such that the pair $[0 u_\gamma^k, u_\gamma0u_\gamma^{k-1})$ is an admissible extremal pair with $(s^\infty,t^\infty) \in R_\beta$, and so in the limit the entire tree gives maximal extremal pairs.
\end{proof}
\begin{figure}[t]
 
\begin{tikzpicture}[->]
\node (0) at (0,5) {$0$};
\node (1) at (10,5) {$0001001$};

\node (12) at (5,4) {$00010010$};

\node (13) at (3.3,3) {$000100100$};
\node (23) at (6.7,3) {$000100100010010$};

\path (0) edge (12);
\path (0) edge (13);

\path(1) edge (12);
\path(1) edge (23);

\path (12) edge (13);
\path (12) edge (23);
\end{tikzpicture}

\caption{Beginnings of a Farey tree with roots $0$ and $u_{2/7}$.}
\label{nastyegpic}
\end{figure}
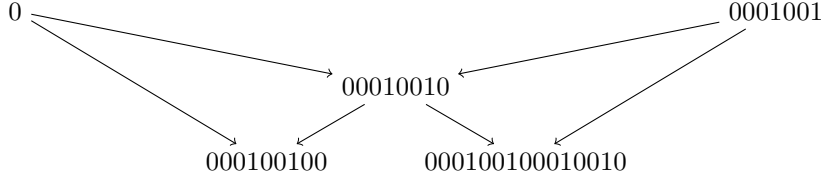
\begin{lemma} \label{whichgamma}
 Let $\gb \in \Q$ with continued fraction expansion $[0; a_1, a_2, \dots, a_n]$, with $a_n > 1$. Then $\gb \in [\gamma_1, \gamma)$ if and only if $\gamma$ has continued fraction expansion given by $[0; a_1, \dots, a_{2k+1}]$ with $2k+1 < n$ or by $[0; a_1, \dots, a_n-1, 1, k]$ for $k \geq 1$ if $n$ is odd and $[0; a_1, \dots, a_n, k]$ for $k \geq 1$ if $n$ is even.
\end{lemma}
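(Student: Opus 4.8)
The plan is to reduce the statement to a computation with continued fractions describing the position of $\1$ relative to the nested intervals $[w_\gamma^\infty, w_{\gamma_2}w_\gamma^\infty]$ of Lemma~\ref{gbdescription}. Recall that $\gamma(\beta) = \gamma \in [\gamma_1, \gamma)$ means exactly that $w_{\gamma_1}^\infty \preceq \1 \prec w_\gamma^\infty$ (using that the plateau intervals are disjoint and ordered). So the task is purely combinatorial: given $\gamma = p/q$ with continued fraction $[0; a_1, \dots, a_n]$, $a_n > 1$, describe which rationals $\gamma'$ have $\gamma$ in the half-open interval $[\gamma'_1, \gamma')$ where $\gamma'_1$ is the left Farey parent of $\gamma'$.

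First I would recall the standard dictionary between the Farey (Stern--Brocot) tree and continued fractions: the rational with continued fraction $[0; a_1, \dots, a_n]$ sits at a node reached by going left $a_1$ times (or $a_1 - 1$, depending on convention), then right $a_2$ times, and so on, and its two Farey parents are obtained by truncating the last partial quotient by one and by deleting the last partial quotient respectively --- more precisely, for $\gamma' = [0; a_1, \dots, a_m]$ the left and right Farey parents are $[0; a_1, \dots, a_{m-1}]$ and $[0; a_1, \dots, a_m - 1]$ (their order on the number line depending on the parity of $m$). The balanced word $w_{\gamma'}$ and its cyclic shifts are then read off via the same substitution structure. With this in hand, the condition ``$\gamma$ lies between $\gamma'$ and its left parent, i.e. in the subtree hanging off the edge from $\gamma'$ down towards its left parent'' translates into: the continued fraction of $\gamma$ extends that of $\gamma'$ in a controlled way. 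Working out the two parity cases ($n$ odd vs. $n$ even) and the normalisation ambiguity $[0; a_1, \dots, a_n] = [0; a_1, \dots, a_n - 1, 1]$ is what produces the three families in the statement: the ancestors $[0; a_1, \dots, a_{2k+1}]$ with $2k+1 < n$ (these are the proper ancestors of $\gamma$ lying on the correct side), and the two descendant-type families $[0; a_1, \dots, a_n - 1, 1, k]$ or $[0; a_1, \dots, a_n, k]$ according to parity.

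Concretely the key steps, in order, are: (1) translate ``$\gamma(\beta) = \gamma'$ with $\gamma' \in [\gamma'_1, \gamma')$-bracket involving $\gamma$'' into the lexicographic condition on $\1$ via Lemma~\ref{gbdescription}, and note this is equivalent to $\gamma$ being an interior point of the plateau-free region between $\gamma'_1$ and $\gamma'$ --- equivalently $\gamma'$ is an ancestor of $\gamma$ in the tree reached through its left parent, OR $\gamma$ is such an ancestor of a node in that subtree; (2) set up the continued-fraction/Stern--Brocot correspondence with a fixed convention for which direction each partial quotient encodes, and record the parent formulas; (3) handle the ancestors: show that the nodes $\gamma'$ properly above $\gamma$ with $\gamma$ in their left subtree are exactly those obtained by truncating the continued fraction of $\gamma$ after an \emph{odd} number of partial quotients, $[0; a_1, \dots, a_{2k+1}]$ with $2k+1 < n$ (odd because the tree-direction alternates with each partial quotient and we need $\gamma$ on the specified side); (4) handle the descendants: show that the remaining $\gamma'$ are the nodes in the subtree below $\gamma$ reached by first stepping in the correct direction, which by the parent formulas are $[0; a_1, \dots, a_n - 1, 1, k]$ when $n$ is odd and $[0; a_1, \dots, a_n, k]$ when $n$ is even, for $k \geq 1$; (5) check the boundary/normalisation cases (the role of $a_n > 1$, the identity $[\dots, a_n] = [\dots, a_n - 1, 1]$, and the endpoints $\gamma' = \gamma$ and $\gamma'_1 = \gamma$) to confirm the half-open-ness matches the $[\gamma_1, \gamma)$ in the statement.

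The main obstacle I expect is bookkeeping the parity and direction conventions so that the ``odd truncation'' in the ancestor family and the ``$-1, 1$ versus nothing'' split in the descendant family come out matching the asymmetry of $[\gamma_1, \gamma)$ (closed on the left-parent side, open at $\gamma$ itself). This asymmetry is genuinely tied to the earlier convention $w_{\gamma_1 \oplus \gamma_2} = w_{\gamma_2} w_{\gamma_1}$ and to $w_\gamma^\infty$ being the \emph{maximal} shift while $u_\gamma$ is the minimal one, so I would need to be careful to track exactly which endpoint of each plateau interval $[w_{\gamma'}^\infty, w_{\gamma'_2} w_{\gamma'}^\infty]$ is hit, and to reconcile this with the fact that $\gamma(\beta)$ is defined via the maximal admissible balanced word. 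Once the conventions are pinned down, each of steps (3)--(5) is a routine induction on tree depth (or equivalently on the number of partial quotients), using the disjointness of plateaus from the remark after Lemma~\ref{gbdescription}.
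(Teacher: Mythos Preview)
Your steps (2)--(5) are the paper's argument: the proof in the paper is a two-sentence appeal to the standard facts that the odd convergents of $\gb$ form a decreasing sequence of overapproximations, and that the Farey parents of $[0;b_1,\dots,b_m]$ are $[0;b_1,\dots,b_{m-1}]$ and $[0;b_1,\dots,b_m-1]$ (with the parity of $m$ determining which is the left parent). From this it reads off both families of $\gamma$ directly, exactly as you outline in (3) and (4).

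Your step (1), however, is a detour. The lemma as stated is a purely arithmetic fact about a fixed rational $\gb$ and the half-open intervals $[\gamma_1,\gamma)$: there is no $\beta$, no $\1$, and no balanced word in the statement, so there is nothing to translate via Lemma~\ref{gbdescription}. The condition ``$\gb\in[\gamma_1,\gamma)$'' is already at the level of rationals, and the proof never needs to leave that level. This does not make your argument wrong --- the translation is reversible --- but it adds bookkeeping (the endpoint and convention worries you flag at the end) that the paper avoids entirely by staying with continued fractions throughout. Once you drop step (1), your plan and the paper's proof coincide; your version is somewhat more explicit about the converse direction, which the paper leaves implicit.
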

\begin{proof}
 As is well known, the odd convergents in the continued fraction expansion of a number give a decreasing sequence of overapproximations of that number. It is also well known that given a rational $\gamma$ with continued fraction expansion $[0; b_1, \dots, b_n]$ with $b_n>1$, its Farey parents are given by $[0; b_1, \dots, b_{n-1}]$ and $[0; b_1, \dots, b_n-1]$. If $n$ is odd, then $[0; b_1, \dots, b_{n-1}]$ must therefore be the left Farey parent. Therefore it follows immediately that the odd convergents of $\gb$ will satisfy $\gb \in [\gamma_1, \gamma)$. Those given by $[0; a_1, \dots, a_n-1, 1, k]$ for $n$ odd or $[0; a_1, \dots, a_n, k]$ for $n$ even are then further overapproximations.
\end{proof}

The case $\gamma=1/n$ is excluded simply because it becomes subsumed in other cases: for example, the word $000101$ may be consider as $00(01)^2$ with $\gamma = 1/2$ or as $0(00101)$ with $\gamma =2/5$.

It remains to explain why these intervals cover almost all of $R_\beta$. To see this, suppose that $\gb \in [\gamma_1, \gamma)$, or equivalently
\begin{equation} \label{condforrbeta}
  \1 \in [(w_{\gamma_1})^\infty, w_\gamma^\infty).
\end{equation}

Then for almost every $\beta$ in this range there exists a maximal pair $(s,t)$ in the tree from $0$ and $u_\gamma$ such that $s^\infty$ is admissible and $st^\infty$ is inadmissible. Then consider the greatest admissible sequence in $[s^\infty, st^\infty]$. This will end in $\1$ so may be rewritten as a finite sequence.

\begin{lemma} \label{pointsfit}
 The greatest admissible finite sequence for $\gamma$ described above is equal to $0u_{\gamma_2}$.
\end{lemma}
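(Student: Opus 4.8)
The plan is to identify the greatest admissible sequence in $[s^\infty, st^\infty]$ — where $(s,t)$ is the maximal pair from the tree generated by $0$ and $u_\gamma$ with $s^\infty$ admissible and $st^\infty$ inadmissible — with the point $\xi$ whose finite base-$\beta$ expansion is $0u_{\gamma_2}$, writing $\gamma_1,\gamma_2$ for the left and right Farey parents of $\gamma$, so that $w_\gamma = w_{\gamma_2}w_{\gamma_1}$ by the Farey construction. It suffices to check that $\xi$ is admissible, that $s^\infty\preceq\xi\prec st^\infty$, and that no admissible sequence lies strictly between $\xi$ and $st^\infty$; rewriting $\xi$ as a finite word is then a formality.

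First I would locate the Parry violation in $st^\infty$. Since $st^\infty$ begins with the block $0^{n+1}$ it is $\prec\1$, and since $t$ is a cyclic permutation of $s$ the word $t^\infty$ and all of its shifts are admissible; hence the least index $m$ with $\sigma^m(st^\infty)\succ\1$ satisfies $1\le m<|s|$, so $(st^\infty)_m=s_m$. The same minimality argument applied to $\sigma^{m-1}$ shows $s_m=0$: if $s_m=1$ then $\sigma^{m-1}(st^\infty)=1\cdot\sigma^m(st^\infty)\succ 1\cdot\sigma\1=\1$, using $\sigma^m(st^\infty)\succ\1\succeq\sigma\1$, contradicting minimality of $m$. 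Applied to $\sigma^j$ for $1\le j<m$ it shows that $s_{j+1}\dots s_m\neq d_1\dots d_{m-j}$ (where $\1=d_1d_2\cdots$), so the comparison $\sigma^j(st^\infty)\preceq\1$ is already decided within the common prefix $s_{j+1}\dots s_m$ of $\sigma^j(st^\infty)$ and of the candidate shift $\sigma^j\xi$.

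The combinatorial heart is then to read off $m$ and the prefix $s_1\dots s_{m-1}$ from the tree word $w$. Every tree word is a concatenation of $0$'s and copies of $u_\gamma$, and $u_\gamma$ is a cyclic permutation of $w_{\gamma_2}w_{\gamma_1}$; from this one shows, on the one hand, that $\sigma^m(st^\infty)$ begins with the block $w_{\gamma_2}$ followed by a $1$ for $m=1+|u_{\gamma_2}|$, with $s_1\dots s_{m-1}=0\,u_{\gamma_2}^{-}$ (here $u_{\gamma_2}^{-}$ is $u_{\gamma_2}$ with its terminal $1$ deleted), and, on the other hand, that the hypothesis $\gb\in[\gamma_1,\gamma)$, i.e. $\1\in[w_{\gamma_1}^\infty,(w_{\gamma_2}w_{\gamma_1})^\infty)$, forces $\1$ to begin with $w_{\gamma_2}$ followed by a $0$ — precisely in the range in which a truncating pair $(s,t)$ of the required type exists. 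Together these identify $m$ as the first violation and give $s_1\dots s_{m-1}1=0u_{\gamma_2}$.

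Setting $\xi=s_1\dots s_m\,\1$: for $j<m$ the shift $\sigma^j\xi=s_{j+1}\dots s_m\,\1$ shares the prefix $s_{j+1}\dots s_m$ with $\sigma^j(st^\infty)$, and by the second paragraph this prefix already forces $\sigma^j(st^\infty)\prec\1$, hence $\sigma^j\xi\prec\1$ too; for $j=m$ we have $\sigma^m\xi=\1$, and for $j>m$ a shift of $\1$, so $\xi$ is admissible. We have $\xi\prec st^\infty$ (they agree up to position $m$ while $\sigma^m\xi=\1\prec\sigma^m(st^\infty)$) and $s^\infty\preceq\xi$ (from $(s^\infty,t^\infty)\in R_\beta$ and the explicit form of $\xi$); and any admissible $y$ with $\xi\prec y\preceq st^\infty$ would agree with both on positions $1,\dots,m$, whence $\sigma^m y\succ\sigma^m\xi=\1$, contradicting admissibility. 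Finally, since $s_m=0$ the word $s_1\dots s_m$ is $0u_{\gamma_2}$ with its last letter changed to $0$, so the identification $v10^\infty=v0\1$ gives $0u_{\gamma_2}0^\infty=s_1\dots s_{m-1}\,1\,0^\infty=s_1\dots s_m\,\1=\xi$, the asserted finite expansion. The main obstacle is the middle paragraph: controlling, uniformly over all tree words $w$ that yield admissible maximal pairs, exactly where $st^\infty$ first overtakes $\1$, and in particular establishing that $\1$ begins with the entire block $w_{\gamma_2}$ throughout the relevant parameter range. Everything else reduces to routine prefix comparisons together with the Parry admissibility criterion recalled in Section~\ref{extrdefnsect}.
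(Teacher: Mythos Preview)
Your overall strategy is exactly the paper's: locate the first Parry violation in $st^\infty$, replace everything from that point by $\1$, then identify the resulting finite prefix with $0u_{\gamma_2}$ via the combinatorics of the balanced word $u_\gamma$. Your second and fourth paragraphs (isolating $m$, showing $s_m=0$, and verifying that $s_1\dots s_m\1$ is admissible and maximal) are more carefully argued than the paper's one-line ``truncate from the maximal shift of $s$ and replace the preceding $0$ with a $1$,'' and this extra rigor is genuinely useful.

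However, your middle paragraph contains a concrete false claim. You assert that the hypothesis $\gb\in[\gamma_1,\gamma)$ forces $\1$ to begin with $w_{\gamma_2}$ followed by a $0$, so that the comparison $\sigma^m(st^\infty)\succ\1$ is decided immediately after the block $w_{\gamma_2}$. Take the paper's running example $\1=(10010000)^\infty$ with $\gamma=2/7$, so $\gamma_2=1/3$ and $w_{\gamma_2}=100$. Then $\1$ begins $1001\ldots$, i.e.\ with $w_{\gamma_2}$ followed by a $1$, not a $0$; and indeed $\sigma^4(st^\infty)=10010001\ldots$ and $\1=10010000\ldots$ agree on seven symbols, not three. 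So the mechanism you propose for why $m=1+|u_{\gamma_2}|$ is a violation point does not work in general. (Your claim happens to hold for $\gamma=3/11$ in the same example, where $w_{\gamma_2}=w_{2/7}=1001000$ and $\1$ does begin $10010000$; but it must be established for every relevant $\gamma$.)

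The paper's route through this step is different: rather than comparing $\sigma^m(st^\infty)$ directly with $\1$, it identifies $m$ as the position of the \emph{maximal} cyclic shift of $s$ (equivalently of $u_\gamma$), which is where $u_\gamma$ begins to read as $w_\gamma=w_{\gamma_2}w_{\gamma_1}$, and then invokes a standard palindromic/conjugacy property of Christoffel words (Lothaire, Proposition~2.2.2) to conclude that the prefix of $u_\gamma$ up to that point, with its last $0$ changed to $1$, equals $u_{\gamma_2}$. You should replace your ``$\1$ begins $w_{\gamma_2}0$'' heuristic with this structural identification; the rest of your argument then goes through unchanged.
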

\begin{proof}
 $s^\infty$ is admissible so clearly the sequence $st^\infty$ becomes inadmissible with the very first $t$. Therefore, to be admissible we should truncate from the maximal shift of $s$ and replace the preceding $0$ with a $1$. We know that $s$ begins $0u_\gamma$ and must be the maximal shift beginning this way, so consider $u_\gamma = u_1 \dots u_q = u_{\gamma_1} u_{\gamma_2}$. There exists $k$ such that $\sigma^k u_\gamma$ begins $w_{\gamma_2}$, which will be the point at which to truncate the sequence. Then because $u_\gamma$ is balanced, we have that $u_1 \dots u_{k-2}1 = u_{\gamma_2}$.\footnote{This follows from Lothaire \cite[Proposition 2.2.2]{lothaire02}.} This proves the lemma.
\end{proof}

The cases where $s^\infty$, $st^\infty$ or a limit point of the tree end in $\1$ and so may themselves be rewritten as finite sequences are similar.

The descendants of the above maximal pairs will be given by taking a balanced descendant pair $(s_{\rr}, t_{\rr})$ and applying the map $m: 0 \to 0$, $1 \to u_\gamma$. This completes the description of $D_1(\beta)$.

\begin{eg} \label{rexample}
 Consider $\1 = (10010000)^\infty$, or equivalently $\beta \approx 1.427$, which has $\bgb = (1/4, 1/2)$. In this case we have
\begin{equation*}
 R_\beta = (0(0001)^\infty, 0001) \times ((0001)^\infty, 001).
\end{equation*}

From Lemma \ref{whichgamma}, the correct $\gamma$ to consider are $\gamma_k=(k+1)/(4k+3)$ for $k \geq 1$.

Taking $k=1$, $\gamma=2/7$ gives $u_{2/7} = 0001001$. Create a tree with left root $0$ and right root $u_\gamma$, as shown in Figure~\ref{nastyegpic}.

The right half of this tree is inadmissible, but the left half gives admissible words. The largest admissible pair from this tree is the middle pair $(s,t) = (00001001, 00010010)$. Here we can see from $\1$ that $(s^\infty, t^\infty)$ is only just admissible and equals $(0001, 001)$, which is precisely the right endpoints of $R_\beta$. The smallest pairs given from $\gamma=2/7$ will be of the form $(0u_{2/7}0^{k-1}, u_{2/7}0^k)$ with $k$ large. Thus $\gamma=2/7$ enables us to cover the region $(00001001, 0001) \times (0001001, 001)$ with maximal extremal pairs. This region corresponds to approximately $(0.227,0.245) \times (0.324, 0.344)$ and can be clearly distinguished in Figure \ref{rpic}.

The next value of $\gamma$ is $3/11$, with $u_{3/11} = 00010001001$. Following the same process of creating a tree, the largest admissible pair from this tree is again be the middle pair, now given by $(s,t) = (000010001001, 000100010010)$.

Consider $st^\infty = 000010001001(000100010010)^\infty$. This is inadmissible. Truncating this sequence by choosing the largest admissible sequence in $[s^\infty, st^\infty]$ gives $00001001$. This illustrates Lemma \ref{pointsfit}: this largest possible sequence from $\gamma=3/11$ is precisely the limit of the smallest sequences from $\gamma=2/7$. If we were to continue to examine further values of $\gamma$, this pattern would continue, in that the maximal extremal pairs given from different $\gamma$ do not overlap one another and fit together perfectly leaving no gaps. The pairs from $\gamma = 3/11$ can also be seen in Figure \ref{rpic} and correspond to the region given by approximately $(0.2238, 0.227) \times (0.319, 0.324)$.

\end{eg}

Figure~\ref{rpic} shows an approximation to $D_i(\beta)$ in the region $R_\beta$ for $\beta \approx 1.427$. For this value of $\beta$, the region $R_\beta$ is very small, and $D_1(\beta)$ is once again too small to see distinctly.

\section{Summary}

We summarise the results in the following theorem.

\begin{theorem}
 Let $\beta \in (1,2)$ satisfy $\1 \in [1/(n+1), 1/n)$. Then as depicted in Figure~\ref{maxextremalpic}, we have that
\begin{itemize}
\item For any $a > 1/\beta$, $(a,b) \in D_2(\beta)$;
\item For any $b< \infXgb$, $(a,b) \in D_2(\beta)$;
 \item The boundary of $D_0(\beta)$ is given by joining points $(s^\infty, ts^\infty)$, $(s^\infty, t^\infty)$ and $(st^\infty, t^\infty)$ where $(s,t)$ are given by maximal extremal pairs for $\beta$;
 \item The boundary of $D_1(\beta)$ is given by joining the points $(s^\infty, ts^\infty)$ to $(sts^\infty, ts^\infty)$ and the points $(st^\infty, tst^\infty)$ to $(st^\infty, t^\infty)$, where $(s,t)$ are given by Farey descendants of maximal extremal pairs for $\beta$;
 \item The boundary of $D_2(\beta)$ is given by joining points $(s^\infty, ts^\infty)$, $(st^\infty, ts^\infty)$, and $(st^\infty, t^\infty)$ where $(s,t)$ are given by maximal extremal pairs for $\beta$.
\end{itemize}

The maximal extremal pairs for $\beta$ are given by
\begin{itemize}
 \item Shifts of the balanced word $w_\gamma$ with $\gamma \leq \gb$, given by $(0^k$-$\max(w_\gamma), 0^{k-1}1$-$\min(w_\gamma))$ for $0 < k \leq n$;
 \item Pairs $(0^n$-$\max(w), 0^{n-1}1$-$\min(w))$ formed from admissible words $w$ taken from a Farey tree with roots $0$ and $u_\gamma$, where $u_\gamma$ is the minimal shift of the balanced word corresponding to $\gamma \neq 1/k$ and $\gamma$ satisfies $\gb \in [\gamma_1, \gamma)$.
\end{itemize}

If a maximal extremal pair $(s,t)$ has $s^\infty$ admissible and $st^\infty$ inadmissible, then the above results hold with any inadmissible sequences replaced by the greatest admissible sequence in $[s^\infty, st^\infty]$.
\end{theorem}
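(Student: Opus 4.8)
The plan is to assemble the statement from the results of Sections~\ref{transfersection} and~\ref{rbeta}; the work lies not in new arguments but in organising the pieces and, crucially, in checking that the list of maximal extremal pairs is exhaustive. The two easy bullet points come first: the claim for $a > 1/\beta$ is the Large $a$ lemma, and the claim for $b < \infXgb$ is the Small $b$ lemma together with its corollary (splitting according to whether $\gb \in \Q$); in both cases the hole lies in $D_2(\beta)$, hence in $D_1(\beta)$ and $D_0(\beta)$ by the corollary that $D_2(\beta) \subset D_1(\beta) \subset D_0(\beta)$. It then remains to describe the three sets inside the box $I_\beta = (0\infXgb, 1/\beta) \times (\infXgb, 1\infXgb)$, which splits into $I_\beta \setminus R_\beta$ and $R_\beta$.

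Second I would verify the classification of maximal extremal pairs. That each listed pair is maximal extremal is immediate: Lemma~\ref{shiftbalanced} handles the shifted balanced pairs $(0^k$-$\max(w_\gamma), 0^{k-1}1$-$\min(w_\gamma))$, and Lemmas~\ref{treeworks}, \ref{treekidswork} and~\ref{whichgamma} handle the pairs $(0^n$-$\max(w), 0^{n-1}1$-$\min(w))$ drawn from a Farey tree with roots $0$ and $u_\gamma$. The converse — that there are no further maximal extremal pairs meeting $I_\beta$ — I would obtain by a covering argument. Away from $R_\beta$, i.e. for $a \in (0^n1, 0\1)$, Lemma~\ref{shiftbalanced} together with the non-overlap properties of the Farey construction (Remark~\ref{fareydescs} and the remark on balanced pairs, read with $s_\gamma, t_\gamma$ in place of $0,1$) shows that the plateaus $[s^\infty, st^\infty]$ tile $(0^n1, 0\1) \times (0^{n-1}1, 1\infXgb)$ up to the measure-zero set of tree limit points. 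Inside $R_\beta$ the same non-overlap property and Lemma~\ref{pointsfit} show that the families coming from the successive admissible $\gamma$ of Lemma~\ref{whichgamma} abut with no gaps — the greatest admissible sequence from one $\gamma$ is precisely the limit of the smallest sequences from the previous one — so their plateaus fill $R_\beta$ up to limit points. Hence every hole in $I_\beta$ is either controlled by a listed pair or lies in the closure of the union of their plateaus.

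Third, with the maximal extremal pairs and their Farey descendants in hand, the three boundary descriptions follow from Theorem~\ref{TransferLemma}. For $D_0(\beta)$: item~(1) keeps $\J$ nonempty on the L through $(s^\infty, ts^\infty)$, $(s^\infty, t^\infty)$, $(st^\infty, t^\infty)$, while maximality (item~(4), i.e.\ Lemma~\ref{maxisnice}) forces $\J$ to consist only of boundary points just outside it. For $D_2(\beta)$: item~(3), valid because $|u|$ and $|s|$ are coprime for these pairs, gives finitely many bad $n$ on the L through $(s^\infty, ts^\infty)$, $(st^\infty, ts^\infty)$, $(st^\infty, t^\infty)$, and maximality again bounds the region above. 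For $D_1(\beta)$: items~(2), (5), (6) applied to the Farey descendants $(s_\rr, t_\rr)$ show $\J$ is uncountable exactly up to the sub-plateaus $[s_\rr^\infty, s_\rr t_\rr s_\rr^\infty]$, and by the displayed union $[s^\infty, st^\infty] = \bigcup [s_\rr^\infty, s_\rr t_\rr s_\rr^\infty]$ of Section~\ref{transfersection} these fill each plateau up to limit points, giving the stated boundary between $(s^\infty, ts^\infty)$ and $(sts^\infty, ts^\infty)$ and between $(st^\infty, tst^\infty)$ and $(st^\infty, t^\infty)$. The truncation clause is then handled as in the discussion after Theorem~\ref{TransferLemma}: when $s^\infty$ is admissible but $st^\infty$ is not, replace each inadmissible sequence by the greatest admissible sequence below it — of the explicit form $0u_{\gamma_2}$ by Lemma~\ref{pointsfit} — and observe that the sequences $s^\infty$, $ts^\infty-\epsilon$, $st^\infty+\epsilon$ used to show $\J$ is large remain admissible, so only the upper edge of each $D_i(\beta)$ is cut back.

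The step I expect to be the main obstacle is the completeness claim of the second paragraph: proving that the two families of maximal extremal pairs leave no gap inside $I_\beta$. This is where the continued-fraction bookkeeping of Lemma~\ref{whichgamma} and the ``fitting together'' of Lemma~\ref{pointsfit} must be combined with care, since a genuine gap would correspond to a maximal extremal pair of some other kind, and ruling this out is exactly what makes maximal extremal pairs (rather than balanced words alone) the right notion for $\beta \in (1,2)$.
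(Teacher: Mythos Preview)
Your proposal is correct and matches the paper's approach: this theorem is stated in the Summary section without a standalone proof, being precisely a compilation of the results of Sections~\ref{transfersection} and~\ref{rbeta}, and your assembly draws on exactly the lemmas the paper intends (Large/Small $a,b$ for the first two bullets; Theorem~\ref{TransferLemma} for the boundary shapes; Lemma~\ref{shiftbalanced} and Lemmas~\ref{treeworks}--\ref{pointsfit} for the list of maximal extremal pairs and their tiling of $I_\beta$). Your flagged obstacle --- the exhaustiveness of the listed pairs inside $R_\beta$ --- is precisely what the paper addresses via Lemma~\ref{pointsfit} and the surrounding discussion, so your plan to combine Lemma~\ref{whichgamma} with Lemma~\ref{pointsfit} is the intended route.
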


In summary, the boundaries of $D_i(\beta)$ consist of a countable set of plateaus which are closely linked to the set of maximal extremal pairs for $\beta$, as explained in Section~\ref{transfersection}. The maximal extremal pairs are mainly balanced words, but for almost every $\beta \in (1,2)$ there is a small region where there are no admissible balanced words. In this region the maximal extremal pairs are formed by taking certain inadmissible balanced words and adding a $0$ to make them admissible, as described in Section~\ref{rbeta}. We include some pictures of $D_i(\beta)$ for different values of $\beta$; see Figures~\ref{wholeone} and \ref{wholetwo}.

\begin{figure}
\centering
  \includegraphics[scale=0.6]{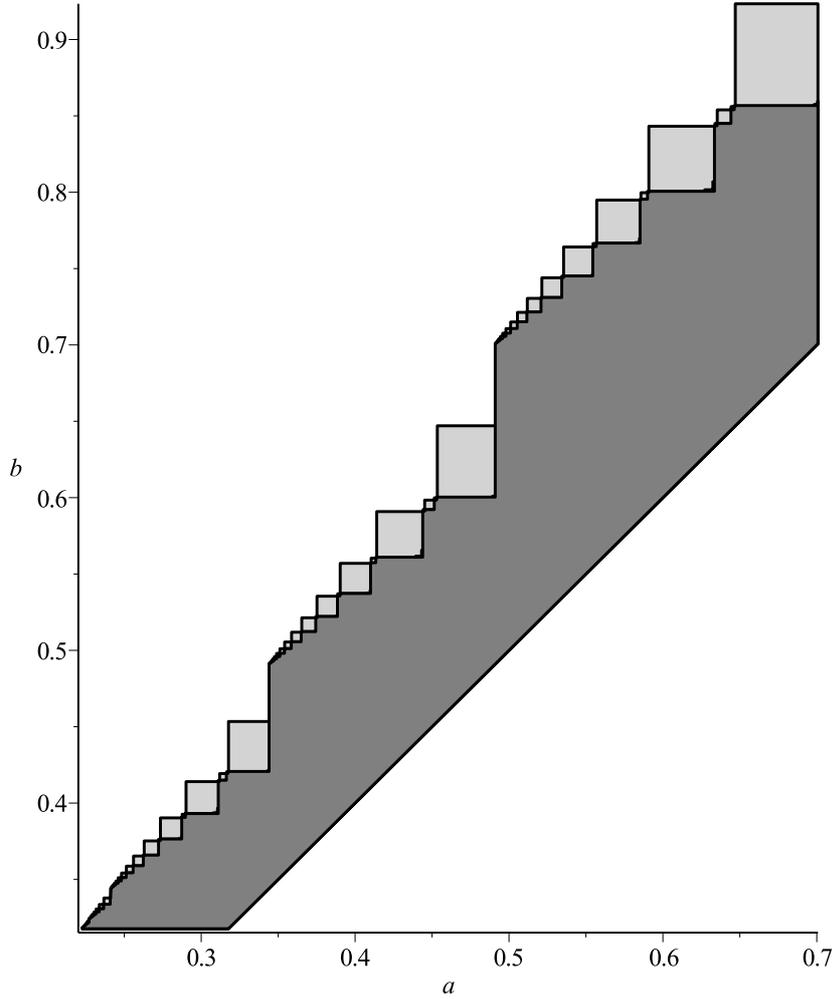}
\caption{$D_2(\beta)$ (dark grey), $D_1(\beta)$ (white + dark grey) and $D_0(\beta)$ (light grey + white + dark grey) for the region $I_\beta$ with $\1 = (10010000)^\infty$, $\beta \approx 1.427$, $\bgb = (1/4, 1/2)$.}
\label{wholeone}
\end{figure}
\begin{figure}
\centering
  \includegraphics[scale=0.58]{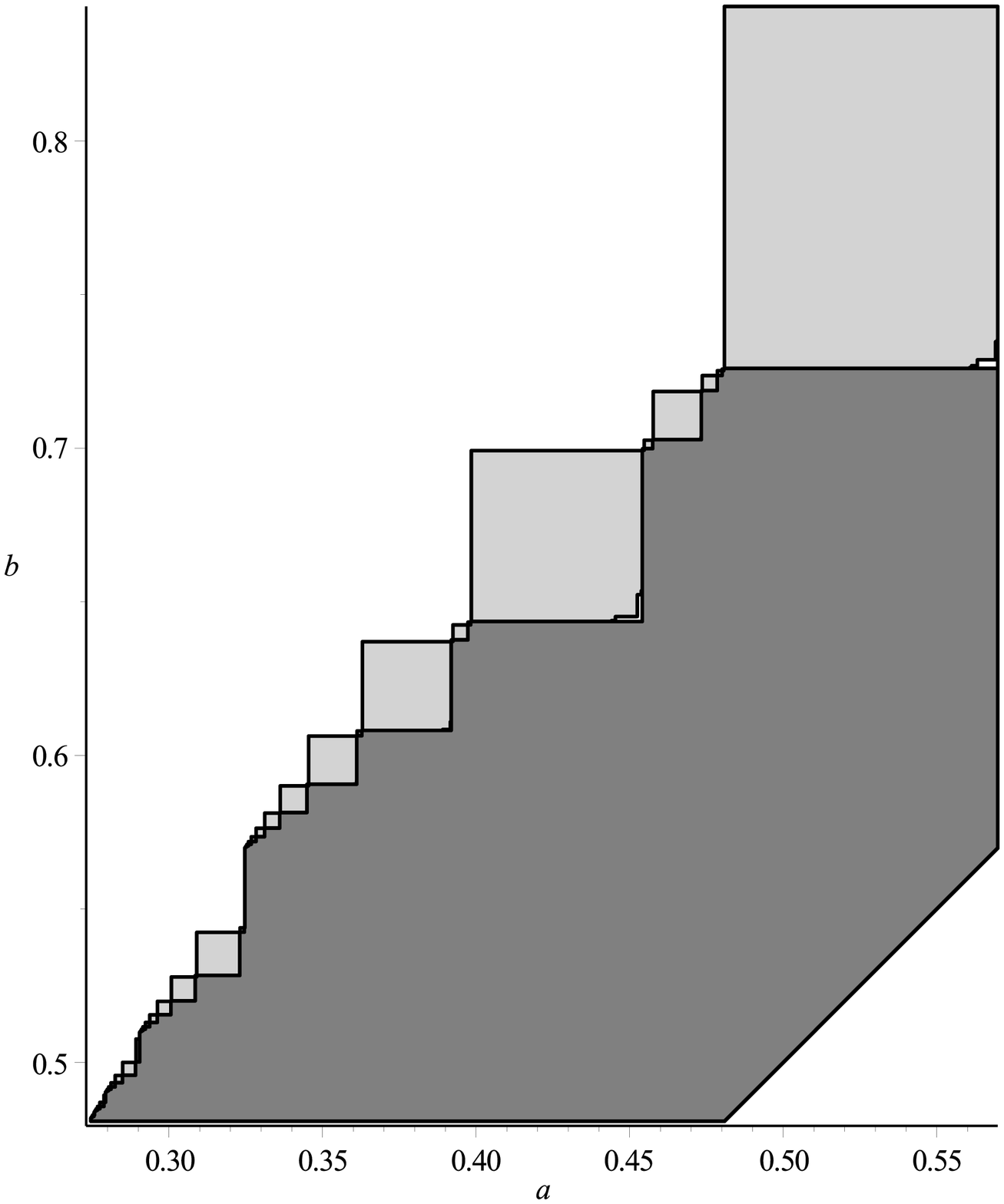}
\caption{$D_2(\beta)$ (dark grey), $D_1(\beta)$ (white + dark grey) and $D_0(\beta)$ (light grey + white + dark grey) for the region $I_\beta$ with $\1 = (1100)^\infty$, $\beta \approx 1.755$, $\bgb = (1/2, 1/2)$.}
\label{wholetwo}
\end{figure}

We note the following result.
\begin{lemma} Let $\gb \in [1/(n+1), 1/n)$. If $(a,b) \in D_i(\beta)$ then $b-a < C_i(\beta)$ where
\begin{align*}
C_0(\beta) &= \frac{\beta^{n-1}(\beta-1)}{\beta^{n+1}-1}, \\
C_1(\beta) = C_2(\beta) &= \frac{\beta-1}{\beta^2}. \\
\end{align*}
\end{lemma}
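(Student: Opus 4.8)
The plan is to bound the "width" $b - a$ of any admissible hole in each of the three regimes by identifying the widest plateau in the boundary of $D_i(\beta)$, i.e. the largest gap in $[0,1)$ between two points of the relevant set of orbits, and then translating that gap into a difference of real numbers via the $\beta$-expansion. The key structural fact I would use is that the boundary of each $D_i(\beta)$ is built from maximal extremal pairs $(s,t)$ (for $D_0$ and $D_2$) or their Farey descendants (for $D_1$), and that the corresponding plateaus are $[s^\infty, st^\infty]$, respectively $[s^\infty, sts^\infty]$, as recorded in Figure~\ref{maxextremalpic} and the Summary theorem. So if $(a,b) \in D_i(\beta)$ then $(a,b)$ lies strictly below/above the relevant boundary curve, and in particular $b - a$ is strictly less than the largest vertical extent of the complement of $D_i(\beta)$, which is attained at the widest such plateau.

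For $D_0(\beta)$ and $D_2(\beta)$: when $\gb \in [1/(n+1), 1/n)$, the relevant maximal extremal pairs include the shifted balanced pair coming from $\gamma = 1/n$ (or its degenerate truncation), namely $(s,t) = (0^n\text{-}\max(w_{1/n}), 0^{n-1}1\text{-}\min(w_{1/n}))$. Here $w_{1/n} = 10^{n-1}$, so $s^\infty = (0^n1)^\infty$ shifted appropriately and $t^\infty = (0^{n-1}1)(0^n1)^{-}\ldots$; more simply, the widest gap among periodic orbits avoiding nothing is realised by the interval between $(0^{n-1}1)^\infty$-type and $(0^n 1)^\infty$-type points. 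I would compute $s^\infty$ and $t^\infty$ explicitly as $\beta$-expansions: the point with expansion $(10^n)^\infty$ equals $\frac{\beta^{n}}{\beta^{n+1}-1}\cdot\frac{(\beta-1)}{?}$ — more carefully, $\sum_{k\ge 0}\beta^{-1-k(n+1)} = \frac{\beta^{-1}}{1-\beta^{-(n+1)}} = \frac{\beta^{n}}{\beta^{n+1}-1}$, and similarly for the other endpoint, and then $t^\infty - s^\infty$ (or rather the full plateau width $st^\infty - s^\infty$) simplifies to $\frac{\beta^{n-1}(\beta-1)}{\beta^{n+1}-1}$. For $D_1(\beta)$ and $D_2(\beta)$ the relevant plateau width is $sts^\infty - s^\infty$ for the descendant, which for the extreme descendant of the $\gamma = 1/2$ balanced pair at $n=1$ reduces to a single block $t$ versus $s$, giving the value $\frac{\beta-1}{\beta^2}$ (the width of the hole $((01)^\infty, (0110)(01)^\infty)$-type interval, i.e. the difference between consecutive relevant points differing by one symbol at position $2$).

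Concretely, the steps in order: (i) recall from the Summary theorem the list of maximal extremal pairs for $\beta$ with $\gb \in [1/(n+1),1/n)$ and note that among them the pair with the widest associated plateau $[s^\infty, st^\infty]$ (resp. $[s^\infty, sts^\infty]$) is the one coming from the smallest admissible $\gamma$, i.e. $\gamma = 1/n$ for $D_0$ (possibly truncated, in which case the plateau can only be narrower, so the bound still holds) and $\gamma = 1/2$ (via a depth-one descendant, yielding the $n$-independent value) for $D_1, D_2$; (ii) write down the $\beta$-expansions of the two endpoints of that plateau and evaluate their difference as a geometric-type series, obtaining $C_0(\beta)$ and $C_1(\beta) = C_2(\beta)$; (iii) invoke Remark~\ref{cornerremark} together with Theorem~\ref{TransferLemma} to conclude that any $(a,b)\in D_i(\beta)$ lies strictly inside the region below the boundary, so $b-a$ is strictly less than this maximal plateau width. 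The main obstacle I anticipate is step (i): making rigorous the claim that the \emph{widest} plateau is the one I single out — one must check that no other maximal extremal pair (including the unbalanced ones from the region $R_\beta$, and all the shifted balanced pairs for $0 < k \le n$, and all Farey descendants) gives a longer interval. For the balanced pairs this follows from the fact that moving down the Farey tree only shrinks the intervals $[w_\gamma^\infty, w_{\gamma_2}w_\gamma^\infty]$ (Lemma~\ref{gbdescription} and the disjointness remark), and for the $R_\beta$ pairs one uses that they sit inside $R_\beta$, whose vertical extent $(\infXgb, 0^{n-1}1)$ is itself dominated by the relevant $C_i(\beta)$; I would verify the boundary case $\beta \to$ (endpoints of the plateau $[1/(n+1),1/n)$) to confirm the formula is sharp and the inequality is strict.
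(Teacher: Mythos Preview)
Your high-level strategy --- reduce to the corners of the boundary and compute $b-a$ there --- is correct and is what the paper does. But several concrete steps in your plan are off, and you are missing the simplification that makes the argument clean.

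First, you conflate the horizontal plateau width with the diagonal quantity $b-a$. For $D_0$ the extreme corner is $(s^\infty,t^\infty)$, so the relevant quantity is $t^\infty-s^\infty$, not the length of $[s^\infty,st^\infty]$. For $D_1$ and $D_2$ the extreme corners are $(s^\infty,ts^\infty)$ and $(st^\infty,t^\infty)$, so the relevant quantity is $ts^\infty-s^\infty$ (equivalently $t^\infty-st^\infty$), not $sts^\infty-s^\infty$.

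Second, you try to single out one particular $\gamma$ (you write $\gamma=1/n$, and later $\gamma=1/2$) as giving the maximum. For $C_1=C_2$ this is unnecessary, and your choice is in any case not well-motivated: the key observation is that $ts^\infty - s^\infty$ equals $\bar t - \bar s$ (the tails cancel), and every maximal extremal pair has $s=u0\text{-}\max(w)$, $t=u1\text{-}\min(w)$ for some prefix $u$, so $\bar t-\bar s$ is maximised exactly when $u$ is empty. The only maximal extremal pairs with $u$ empty are the unshifted balanced pairs, and \emph{every} such pair satisfies $s=01w$, $t=10w$, whence $\bar t-\bar s=\beta^{-1}-\beta^{-2}=(\beta-1)/\beta^2$ independently of which balanced $\gamma$ you pick. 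This immediately disposes of the shifted balanced pairs and of the $R_\beta$ pairs (they all have $|u|\ge 1$), so your anticipated ``main obstacle'' dissolves.

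Third, for $C_0$ one maximises $t^\infty-s^\infty$; again $u$ must be empty, so one is among the balanced pairs, and now the value \emph{does} depend on the length: it is largest for the shortest admissible balanced word. Since $\gb\in[1/(n+1),1/n)$, the word $w_{1/n}=10^{n-1}$ is \emph{not} admissible; the shortest admissible one is $w_{1/(n+1)}=10^n$, giving $(s,t)=(010^{n-1},10^n)$ and hence $t^\infty-s^\infty=\dfrac{\beta^{n-1}(\beta-1)}{\beta^{n+1}-1}$. Your identification of $\gamma=1/n$ is on the wrong side of $\gb$.
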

\begin{proof}
Each $C_i(\beta)$ is given by
\begin{equation*}
\sup \{ b-a : (a,b)\in D_i(\beta) \} = \max \{ b-a : (a,b) \text{ is a corner of }D_i(\beta) \}.
\end{equation*}
For $D_1(\beta)$ and $D_2(\beta)$ this is equal to
\begin{equation*}
\max \{ ts^\infty - s^\infty : (s,t) \text{ is maximal extremal for }\beta \}.
\end{equation*}
Then $ts^\infty - s^\infty = t-s$. Recall that $t = u 1$-$\min(w)$ and $s = u 0$-$\max(w)$ for some word $w$ with $u1$ and $u0$ factors of $w$. Therefore $t-s$ will be maximised when $u$ is the empty word. Then the only maximal extremal pairs with $s$ beginning $0$ and $t$ beginning $1$ are balanced pairs. For any balanced pair, we have $t = 10w$ and $s= 01w$ for some word $w$. Thus $t-s = 10-01 = 1/\beta - 1/\beta^2$, giving the required result for $C_1(\beta)$ and $C_2(\beta)$.

For $C_0(\beta)$, we need to maximise $t^\infty - s^\infty$ and so similarly may conclude that $t=10w$ and $s=01w$. This quantity will clearly be maximised when $w$ is as short as possible. Therefore given $\gb \in [1/(n+1), 1/n)$, we see that this will be maximised when $(s,t) = (10^n, 010^{n-1})$ as this is the shortest possible admissible $w$. Then $t^\infty - s^\infty$ gives $C_0(\beta)$ as stated.
\end{proof}
\section*{Acknowledgments}
The author would like to thank their supervisor Nikita Sido\-rov for his initial suggestion of the problem and his ongoing support, and Kevin Hare for kindly providing his code as a basis to help produce the figures.

\bibliographystyle{abbrv}
\bibliography{Bibliography}
\end{document}